\DeclareMathOperator{\coker}{coker}
\newtheorem{thm}{Theorem}[section]
\newtheorem{lemma}[thm]{Lemma}
\newtheorem{prop}[thm]{Proposition}
\newtheorem{definition}[thm]{Definition}
\newtheorem{corollary}[thm]{Corollary}
\theoremstyle{definition}
\newtheorem{ass}[thm]{Assumption}
\newtheorem{remark}[thm]{Remark}
  \newtheorem{definition-remark}[thm]{Definition-Remark}
\def\ker{\operatorname{ker}}
\def\coker{\operatorname{coker}}
\def\min{\operatorname{min}}
\def\im{\operatorname{im}}
\def\max{\operatorname{max}}
\def\c1{\operatorname{c_1}}
\def\c2{\operatorname{c_2}}
\def\CC{{\mathbb C}}
\def\PP{{\mathbb P}}
\def\A{{\mathcal A}}
\def\Rc{{\mathfrak R}}
\def\M{{\mathcal M}}
\def\N{{\mathcal N}}
\def\O{{\mathcal O}}
\def\I{{\mathcal J}}
\def\Z{{\mathcal Z}}
\def\T{{\mathcal T}}
\def\H{{\mathcal H}}
\def\F{{\mathcal F}}
\def\K{{\mathcal K}}
\def\V{{\mathcal V}}
\def\K{{\mathcal K}}
\def\ee{\mathfrak{e}}
\def\FF{\mathfrak{F}}
\def\cong{\simeq}
\def\sub{\subseteq}
\def\+{\oplus}                   
\def\*{\otimes}                  
\def\Shext{\operatorname{ \mathfrak{E}\mathfrak{x}\mathfrak{t} }}
\def\Pic{\operatorname{Pic}}
\def\Sing{\operatorname{Sing}}
\begin{document}

\title{Degeneration of differentials and moduli of nodal curves on $K3$ surfaces}

\author{C. Ciliberto}
\address{Ciro Ciliberto, Dipartimento di Matematica, Universit\`a di Roma Tor Vergata, Via della Ricerca Scientifica, 00173 Roma, Italy}
\email{cilibert@mat.uniroma2.it}

\author{F. Flamini}
\address{Flaminio Flamini, Dipartimento di Matematica, Universit\`a di Roma Tor Vergata, Via della Ricerca Scientifica, 00173 Roma, Italy}
\email{flamini@mat.uniroma2.it}

\author{C. Galati}
\address{Concettina Galati, Dipartimento di Matematica e Informatica, Universit\`a della Calabria, via P. Bucci, cubo 31B, 87036 Arcavacata di Rende (CS), Italy}
\email{galati@mat.unical.it}

\author{A. L. Knutsen}
\address{Andreas Leopold Knutsen, Department of Mathematics, University of Bergen, Postboks 7800,
5020 Bergen, Norway}
\email{andreas.knutsen@math.uib.no}


\keywords{Severi varieties, moduli map, nodal curves, $K3$ surfaces.}

\subjclass[2010]{14H10, 14J28, 14B05.}

\begin{abstract}  
We consider, under suitable assumptions, the following situation: $\mathcal B$ is a component of the moduli space of polarized surfaces and $\mathcal V_{m,\delta}$ is the universal Severi variety over $\mathcal B$ parametrizing pairs $(S,C)$, with $(S,H)\in \mathcal B$ and $C\in |mH|$ irreducible with exactly $\delta$ nodes as singularities. The moduli map $\mathcal V\to \mathcal  M_g$ of an irreducible component $\mathcal V$ of  $\mathcal V_{m,\delta}$ is generically of maximal rank if and only if certain cohomology vanishings hold. Assuming there are suitable semistable degenerations of the surfaces in $\mathcal B$, we provide  sufficient conditions for the existence of an irreducible component $\mathcal V$ where these vanishings are verified.  As a test, we apply this to $K3$ surfaces and  give a new proof of a result recently independently proved by Kemeny and by the present authors. 

\end{abstract}

\maketitle



\medskip

%
%

\section{Introduction}\label{S:setup}

Let $(S, H)$  be a smooth, projective, polarized, complex  surface, with $H$ an ample line bundle such that the linear system $|H|$ contains smooth, irreducible curves.  We set  
\[
p:=p_a(H)=  \frac{1}{2} (H^2 + K_S\cdot  H) +1,
\]
 the arithmetic genus of any curve in  $|H|$. For any integer $m \geqslant 1$, we set 
\begin{equation*}\label{eq:Hp}
\ell(m):=\dim (|mH|)  \;\; \mbox{and} \;\; p(m) := p_a(mH)=  \frac{m(m-1)}{2} H^2 +  m (p -1).
\end{equation*} One has  
\[
\ell(m)= \chi(\O_S) +m^2 H^2 -p(m), \;\; \mbox{for } \;\; m\gg 0.
\]

For any integer $\delta\in \{0,\ldots,\ell (m)\}$, consider the locally closed, functorially defined subscheme of $|mH|$  
\begin{equation*}\label{eq:Sevvar2}
V_{m, \delta} (S,H) \; \mbox{(simply} \; V_{m, \delta}(S)  \; \mbox{or} \;   V_{m, \delta}\; \mbox{when $H$ or $(S,H)$ are understood),} 
\end{equation*} which is the parameter space for the universal family of irreducible curves in $|mH|$ 
having only $\delta$ nodes as singularities; this is called the $(m,\delta)$--{\em Severi variety} 
of $(S,H)$. 

We will assume that there exists a Deligne--Mumford {\em moduli stack} $\mathcal B$ parametrizing isomorphism classes of polarized surfaces $(S,H)$ as above. Since we will basically deal only with local properties, we can get rid of the stack structure. 
Indeed, up to replacing $\mathcal B$ with an \'etale  finite type representable cover, we may pretend that $\mathcal B$ is a 
{\em fine moduli scheme}.  Although not necessary, we will assume that $\mathcal B$ is irreducible (otherwise one may replace $\mathcal B$ with one of its components). 

Then we may consider the scheme  $\V_{m,\delta}$, called the $(m,\delta)$--\emph{universal Severi variety} over $\mathcal B$, which is endowed with a morphism 
\begin{equation*}\label{eq:phiVK2}
\phi_{m,\delta}: \V_{m,\delta}\to \mathcal B, 
\end{equation*}whose fiber over $(S,H)\in \mathcal B$ is $V_{m, \delta} (S,H)$. A point in 
$\V_{m,\delta}$ can be identified with a pair $(S,C)$, with $(S,H)\in \mathcal B$ and $C\in V_{m, \delta} (S,H)$.

We make the following:

\begin{ass}\label{ass:Sevvaruniv}  
\begin{inparaenum} [(i)]
\item $\mathcal B$ is smooth; \\ 
\item  for all  $(S,H)\in \mathcal B$, the surface  $S$ is {\em regular}, i.e., $h^ 1(S,\O_S)=0$,
and $h^ 0(S, \mathcal T_S)=0$, i.e., $S$ has no positive dimensional automorphism group;\\
\item for any $m \geqslant1$ and  $\delta\in \{0,\ldots,\ell (m)\}$ and for all  $(S,H)\in \mathcal B$, $\ell(m)$ is constant and the Severi variety $V_{m, \delta} (S,H)$ is smooth, of pure (and {\em expected}) dimension $\ell (m)-\delta$ (hence $\V_{m,\delta}$ is smooth, of pure dimension $\dim(\mathcal B)+\ell (m)-\delta$, and $\phi_{m,\delta}$ is smooth and surjective). \\
\end{inparaenum} 
\end{ass}

\begin{remark}\label{rem:casi} In Assumption \ref {ass:Sevvaruniv}(i) we could have asked $\mathcal B$ to be \emph{generically} smooth, and in (iii) we could have asked that for the \emph{general} $(S,H)\in \mathcal B$, the Severi variety $V_{m, \delta} (S,H)$ is smooth, of pure dimension $\ell (m)-\delta$. But under these weaker assumptions, (i) and (iii) hold on a Zariski dense open subset. Since we will be interested only in what happens at the general point of $\mathcal B$, we may replace $\mathcal B$ with this open subset.  The hypotheses in (ii) are technical and  not strictly necessary for our purposes, but they make things easier for us. 

Conditions (i)--(iii) hold  in some important cases, e.g., for polarized $K3$ surface of genus $p$  (in which case the moduli stack is usually denoted by $\mathcal K_p$, is of dimension $19$, and $\ell(m)=p(m)$ for any $m \geqslant 1$, cf.,\,e.g.,\,\cite{fkps,halic2,halicb,cfgk}). Moduli spaces exist also for polarized Enriques surfaces (cf.\;\cite {GrHu}) and degenerations of (polarized) Enriques surfaces are also studied (cf.\;e.g. \cite{E,K,M,Shah}).

Another relevant class is the one of minimal, regular surfaces of general type $(S,H)$ whose moduli space has at least one (generically) smooth component $\mathcal B$ with points $(S,H)$ verifying, for some $m$ and $\delta$, the conditions in \cite{CS,F1,F2} ensuring smoothness and expected dimension of any component of $V_{m,\delta}(S,H)$.  Particular cases are, for some $m$ and $\delta$, surfaces  in $\PP^3$ of degree $d \geqslant 5$ (cf. \cite{CC}) and  complete intersections of general type in $\PP^N$. 
\end{remark}

Consider now the {\em moduli map} 
\begin{equation*}\label{eq:modmap2}
\psi_{m,\delta} :\V_{m,\delta} \to {\mathcal M}_{g}, \,\, \text{where}\,\,  g= p(m)-\delta,  
\end{equation*}
and where ${\mathcal M}_g$ denotes the moduli space of smooth, genus--$g$ curves: $\psi_{m,\delta}$ sends a curve to the isomorphism class of its normalization. 
In this set--up, one is interested in the following general problem:  find conditions on $m$ and $\delta$ ensuring the existence of a component $\V$ of $\V_{m, \delta}$ such that $\psi_{m,\delta}|_{\V}$ is either {\em generically finite} onto its image or {\em dominant} onto ${\mathcal M}_g$. By taking into account Assumption \ref {ass:Sevvaruniv}, in principle,
one may expect
\begin{equation*}
\begin{array}{ll}
\mbox{{dominance if}} & \dim(\mathcal B)+\ell(m)-\delta \geqslant \dim ({\mathcal M}_{g}), \\
\mbox{{generic finiteness onto its image if}} &\dim(\mathcal B)+\ell(m)-\delta \leqslant \dim ({\mathcal M}_{g}).   
\end{array}
\end{equation*}

The typical example is the case of polarized $K3$ surfaces studied by various authors (cf.,\;e.g.,\;\cite{MM,fkps,halic2,halicb,Ke,cfgk}).  In particular,  \cite{cfgk} and Kemeny in \cite{Ke} independently  show that, as expected, $\psi_{m,\delta}$ is generically finite on some component for all $g=p(m)-\delta \geqslant11$ with only a few finite possible exceptions $(m,g)$ with $m \leqslant4$, for fixed $p$; moreover \cite{cfgk} shows that $\psi_{m,\delta}$ is dominant, as expected, for $g\leqslant 11$ with only a few finite possible   exceptions $(m,g)$ with $m \leqslant4$, for fixed $p$. The precise result for $m=1$ is the following:

\begin{thm} \label{thm:mappamod}Let $\V_{m, \delta}$ be the universal Severi variety over the moduli space $\mathcal K_p$ of polarized $K3$ surfaces $(S,H)$ of genus $p$. For $m=1$  and $g=p-\delta$ one has:\\
\begin{inparaenum}[(a)]
\item  \cite{cfgk,Ke} if $g \geqslant15$ there is  a component 
$\V$ of $\V_{1, \delta}$ such that ${\psi_{1,\delta}}_{|_{\V}}$ is generically finite onto its image;\\
\item \cite{cfgk} if $g\leqslant 7$ there is a component 
$\V$ of $\V_{1, \delta}$ such that ${\psi_{1,\delta}}_{|_{\V}}$ is dominant onto $\mathcal M_g$.
\end{inparaenum}
 \end{thm}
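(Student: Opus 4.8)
The plan is to reduce everything to a cohomological criterion for the differential of the moduli map $\psi_{1,\delta}$ to have maximal rank at a general point of a suitable component $\V$ of $\V_{1,\delta}$, and then to verify the relevant vanishings by a degeneration argument. Concretely, for a pair $(S,C)$ with $C$ an irreducible $\delta$-nodal curve in $|H|$ and $f\colon \widetilde C\to C$ the normalization, the Kodaira--Spencer / differential-of-moduli computation on the smooth ambient space $\V_{1,\delta}$ (which is smooth of dimension $19+\ell(1)-\delta=19+p-\delta=19+g$ by Assumption \ref{ass:Sevvaruniv}) identifies $d\psi_{1,\delta}$ up to the contribution of the automorphisms with the natural map on $H^1$ induced by pulling back deformations; its corank and cokernel are governed by $h^0$ and $h^1$ of a sheaf built from the conormal data of $\widetilde C$ in $S$ together with the nodes. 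The upshot, which I would isolate as the first step, is: $\psi_{1,\delta}|_\V$ is generically finite onto its image (resp.\ dominant) as soon as a certain $h^0$ (resp.\ $h^1$) of an explicit line bundle / twisted sheaf on $\widetilde C$ (and of $H^1(S,\mathcal T_S(-C))$-type terms, which vanish for $K3$ by $\mathcal T_S\cong\Omega^1_S$ and Hodge theory) vanishes.

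Next I would set up the degeneration. The idea is to let $(S,H)$ degenerate to a reducible (semistable) limit $S_0=S_1\cup_E S_2$ with $E$ an anticanonical-type curve, e.g.\ a degeneration to the union of two rational surfaces glued along an elliptic curve, or to a Halphen-type / ribbon degeneration — precisely the kind of ``suitable semistable degenerations'' the abstract promises, already used in \cite{cfgk}. On such a limit one constructs a curve $C_0$ which is a transverse union of curves on the two components (plus, if needed, a bunch of nodes that one can smooth independently), chosen so that its normalization $\widetilde C_0$ is a nodal curve of the correct genus $g$ and so that the limiting version of the line bundle from Step 1 is as non-special (resp.\ as special) as possible. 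Then semicontinuity of cohomology along the degeneration transports the needed vanishing from $C_0$ to the general fibre: the cokernel of $d\psi_{1,\delta}$ can only drop (resp.\ the image can only grow) under specialization, so if the limiting differential already has maximal rank, so does the general one. A standard transversality/dimension count then guarantees that the locus of such curves sweeps out an actual component $\V$ of $\V_{1,\delta}$ dominating $\mathcal K_p$.

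For part (a), $g\geqslant 15$, one wants generic finiteness, i.e.\ $\dim\V=19+g\leqslant\dim\mathcal M_g=3g-3$, which holds iff $g\geqslant 11$; so the content is exhibiting a component on which the generic finiteness holds, and the degeneration must produce a limit curve whose associated $h^0$ vanishes. For part (b), $g\leqslant 7$, one wants dominance, $19+g\geqslant 3g-3$, i.e.\ $g\leqslant 11$; here the limit curve must be chosen so that the relevant $H^1$ vanishes, which is easier for small $g$ because there the limiting bundle is more ample. In both regimes the construction on the central fibre is explicit: one writes $C_0$ as, say, $C_1+C_2$ with $C_i\subset S_i$, computes $\widetilde C_0$ and the restricted normal/conormal sequence, and checks the vanishing by a Mayer--Vietoris / gluing argument over $E$, handling the $g=8,\dots,14$ gap (and the small sporadic $(m,g)$) separately or leaving it as the stated exceptions.

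The main obstacle, I expect, is the construction and verification on the central fibre: one must simultaneously (i) keep $\V_{1,\delta}$ smooth of the expected dimension near the limit — i.e.\ control the Severi variety on the singular surface $S_0$ and the independence of smoothing the $\delta$ assigned nodes, which is the usual ``regeneration'' problem on reducible surfaces; (ii) choose the limiting curve generic enough that the line bundle in Step 1 achieves the extremal cohomology (Brill--Noether-type genericity on the possibly reducible $\widetilde C_0$); and (iii) match the numerics so that exactly the claimed ranges $g\geqslant 15$ and $g\leqslant 7$ come out, which is where the $K3$-specific input ($\mathcal T_S\cong\Omega^1_S$, $p_g=1$, the $19$-dimensionality of $\mathcal K_p$) enters decisively. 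Once the limit curve is in hand, the transfer to the general member is a routine semicontinuity argument, and the passage from ``a curve with good cohomology'' to ``a component $\V$ with the stated property'' is a standard dimension count using Assumption \ref{ass:Sevvaruniv}.
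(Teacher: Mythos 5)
Your overall architecture matches the paper's: reduce to cohomological vanishings controlling $d\psi_{1,\delta}$, degenerate $(S,H)$ to a union of two rational surfaces glued along an elliptic curve, and conclude by semicontinuity. (The paper's criterion \eqref{eq:modcond2} is that dominance is \emph{equivalent} to $h^0(\Omega_S(H)\otimes\I_{Z/S})=0$ and generic finiteness \emph{follows from} $h^1(\Omega_S(H)\otimes\I_{Z/S})=0$, and the degeneration is the Ciliberto--Lopez--Miranda one to two rational normal scrolls meeting along an elliptic normal curve, after a small resolution.) But two steps in your outline are genuine gaps, not routine details. The first is the semicontinuity step itself: $\Omega_{R_t}$ has no evident flat limit on the singular central fibre to which semicontinuity applies, and the paper's essential technical input is Friedman's relative log complex $\Omega_{\Rc/\Delta}(\log R)$, which is locally free and flat over $\Delta$, restricts to $\Omega_{R_t}$ for $t\neq 0$ and to $\Lambda^1_R$ on the central fibre; only then does Lemma \ref{prop:semic} give $h^i(\Lambda^1_R\otimes\cdots)\geqslant h^i(\Omega_{R_t}\otimes\cdots)$, and your ``Mayer--Vietoris over $E$'' must be made precise via the residue and trace sequences \eqref{eq:es1}--\eqref{eq:Astronzo} relating $\Lambda^1_R$ to $\Omega_{R_i}(\log E)$ and $\Omega_{R_i}$. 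Relatedly, your parenthetical that the $H^1(S,\T_S(-C))$-type terms ``vanish for $K3$ by Hodge theory'' is wrong: for a $K3$ one has $\T_S\cong\Omega_S$, so these are exactly the groups in \eqref{eq:modcond2} whose vanishing is the entire content of the theorem (nontrivial already for $\delta=0$, cf.\ \cite{beau}); nothing vanishes for free here.

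The second gap is that the bounds $g\geqslant 15$ and $g\leqslant 7$ do not come from the dimension counts $19+g\lessgtr 3g-3$ (which give $11$), nor can they be recovered by ``handling the gap separately''; they come out of the central-fibre computation you defer entirely. Concretely, $15$ arises because the total space of the degeneration acquires $16$ singular points along $E$ (the divisor $W\in|T^1_{R'}|$ has degree $16$), whose preimages under the small resolution feed into the ideal sheaves $\I_{X_i\cup Y_i\cup Z_i}$ controlling $\im(H^0(\FF_i))$; and $7$ arises from the degree-$8$ ramification divisor $X=X_1+X_2$ of the two pencils $|L_i|$ on $E$, which controls $\im(H^0(q_i))$. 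The actual work is proving the transversality statements $\dim(V_1\cap V_2)=\max\{0,g-15\}$ and $\dim(U_1\cap U_2)=\max\{0,g-7\}$ (Lemmas \ref{lem:V1V2trans} and \ref{lem:U1U2trans}) for a suitably chosen $W$ and a suitably chosen nodal curve with its chain of lines, via projections of the scrolls from subsets of these points; this occupies all of Section \ref{sec:lemma} and is where the stated numerical ranges are produced. Without identifying these two sources of the numbers $15$ and $7$, the proposal cannot reach the statement as given.
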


In case (a)  Kemeny's result is  stronger in the sense that he may weaken the assumptions on $g$ for infinitely many $p$'s.

The proofs in \cite{cfgk,Ke}, although different, both rely on studying the fibers of the moduli map on curves on {\it special} $K3$ surfaces. Kemeny's proof is inspired by ideas of \cite{MM}, and  uses appropriate curves on $K3$ surfaces with high rank Picard group. The approach in \cite{cfgk} is by specialization to a reducible $K3$ surface in a partial compactification of $\K_p$ and therefore uses an extension of the moduli map to an appropriate partial compactification of the Severi variety containing reducible curves, with target space $\overline{ \M}_g$. 

In the present paper we want to present a different approach to the aforementioned general problem. This approach relies on two different techniques. Firstly, it is based on the analysis of first order deformations of pairs 
$(S,C)\in \V_{m,\delta}$ as in \cite[\S\,4]{fkps}. 
The strategy in \cite{fkps}, which requires Assumption \ref {ass:Sevvaruniv}, was originally introduced for polarized $K3$ surfaces and for $m=1$, but can be easily adapted to $m\geqslant 1$ and to the case where the canonical bundle is not necessarily trivial 
(cf. also \cite[Thm. 1.1(ii)]{halic2}).  The upshot is the following. 
In the above setting,  take $(S,C) \in \V_{m,\delta}$ and set $Z := {\rm Sing} (C)$. Arguing as in \cite[\S\S\;4-5]{fkps}, 
the differential ${\rm d} \psi_{(S,C)}$ of $\psi:= \psi_{m,\delta}$ at $(S,C)$ can be identified with a suitable cohomology map  (the $H^1(\tau)$ in \cite[(4.21)]{fkps}). In particular, if $\mu_Z:\tilde S\to S$ is the blowing-up of $S$ at $Z$, $\tilde C$ is the strict transform of $C$ and $\T_{S}$ and $\T_{\tilde C}$ are the tangent bundles of $S$ and $\tilde C$ respectively, then
$$\coker ({\rm d} \psi_{(S,C)}) \cong H^2(\mu_Z^*(\T_S)(-\tilde C)) \;\; \mbox{and}  \;\; \ker ({\rm d} \psi_{(S,C)}) \cong H^1(\mu_Z^*(\T_S)(-\tilde C))/H^0(\T_{\tilde C}).$$
Moreover, by the Serre duality theorem and Leray isomorphism, as in \cite[Proof of Thm (5.1)]{fkps}), one has that 
$$
H^2(\mu_Z^*(\T_S)(-\tilde C))\simeq H^0(\Omega_S(mH+K_S) \* \I_{Z/S})\;\; \mbox{and}  \;\;H^1(\mu_Z^*(\T_S)(-\tilde C))\simeq
H^1(\Omega_S(mH+K_S) \* \I_{Z/S}).
$$
{\it By summarizing, if  $(S,C)$ belongs to a component $\mathcal V$ of $\mathcal V_{m, \delta}$, the map ${\psi_{m,\delta}}_{|\mathcal V}$ is
\begin{equation}\label{eq:modcond2}
\begin{array}{ll}
\mbox{{dominant}} & \mbox{if and only if} \;\;\;  h^0(\Omega_S(mH+K_S) \* \I_{Z/S}) = 0, \\
\mbox{{generically finite}} & \mbox{if} \;\;\; \;\;\;\;\;\;\;\;\;\;\;\;\;\;\;\;\;\;     h^1(\Omega_S(mH+K_S) \* \I_{Z/S}) = 0.   
\end{array}
\end{equation}
\noindent Finally, the vanishing of $H^1(\Omega_S(mH+K_S) \* \I_{Z/S})$ is equivalent to generic finiteness of ${\psi_{m,\delta}}_{|\mathcal V}$
if $g\geq 2$.}

Unfortunately, the vanishings  \eqref{eq:modcond2} are, in general, not so easy to be proved, even if one assumes $\delta=0$ (cf. \cite{beau}).  
This is where the second tool of our approach enters the scene (see \S \ref {S:logcomplex}). In order to prove the above vanishings, we propose to use degenerations. We assume in fact that the surfaces in $\mathcal B$ and nodal curves on them possess good semistable degenerations with limiting surfaces that are reducible in two components (the more general case of reducibility in more components could be treated, but, for simplicity, we do not dwell on this here). Then  we look at the \emph{limits} of the relevant cohomology spaces. The latter are driven by the so--called \emph{abstract log complex} (see \cite[\S\;3]{fri}). Using this we arrive at sufficient conditions for the vanishings in  \eqref{eq:modcond2}  to hold, expressed in terms of cohomological properties of suitable sheaves of forms on the two components of the limit surface (cf. \S\;\ref{ss:degdiff}). 
These properties are hopefully easier to prove than the vanishings  in \eqref{eq:modcond2}, since these components are simpler than $S$. 
The results are summarized in Proposition \ref{thm:main}, which is the main result of this note. 

In the rest of the paper (i.e. \S\S \ref {S:K3case} and \ref {sec:lemma}) we test our approach in  the (known) case of $K3$ surfaces for $m=1$, giving a new proof of Theorem \ref {thm:mappamod}. We do not claim that this is  easier than the proofs in \cite {cfgk,Ke}, but it works quite nicely and gives good hopes to fruitfully apply the same method in other unexplored cases, like the ones mentioned at the end of Remark \ref {rem:casi}.  We also mention that the approach of this paper can be applied to the $m>1$ case (using a slightly different degeneration, namely the one in \cite{ck}), but we leave this out as the bounds we obtain depend linearly on $m$ and are thus considerably weaker than the bounds in \cite {cfgk,Ke}. Furthermore, although the present approach gives the same result as \cite{cfgk} for $m=1$, the analysis of this case in \cite{cfgk} is finer (as it studies the family of degenerate $K3$s on which the curves in the fibers of the moduli map live) and is needed in the proof of the $m>1$ case. Thus, the approach in this paper cannot replace the proof of the $m=1$ case in \cite{cfgk}.

%
%

\subsubsection*{{\bf Terminology and conventions}}\label{ss:term} 
We work over $\mathbb{C}$. For $X$ any Gorenstein, projective variety, we denote by $\O_X$ and $\omega_X \simeq\O_X(K_X)$ the  structural and the canonical line bundle, respectively, where $K_X$ is a canonical divisor. We denote by $\T_X$ and $\Omega_X$ the tangent sheaf and the sheaf of $1$-forms on $X$, respectively. For $Y \subset X$ any closed subscheme, $\I_{Y/X}$ (or simply $\I_{Y}$ if $X$ is intended) will denote its ideal sheaf whereas $\N_{Y/X}$ (same as above) its normal sheaf. We use  $\sim$ to denote linear equivalence of divisors. We often abuse notation and identify divisors with the corresponding line bundles, using the additive and the multiplicative notation interchangeably. Finally, we use the convention that 
if $\F$ is a sheaf on a scheme $X$ and $Y \subset X$ is a subscheme, then $H^0(\F)_{|Y}$ is the image of the restriction map $H^0(\F) \to H^0(\F \* \O_Y)$.

\subsection*{Acknowledgements} The first three authors  have been supported by the GNSAGA of Indam and by the PRIN project ``Geometry of projective varieties'', funded by the Italian MIUR.

%
%

\section{Semistable degenerations and the abstract log complex}\label{S:logcomplex}

 In this section we will provide a tool for proving the vanishings of the cohomology groups occurring in \eqref{eq:modcond2} by degeneration of the surface and semicontinuity. The main results are summarized in Proposition \ref{thm:main} below.

\subsection{Semistable degenerations} \label{sec:SS} 

We recall some basic facts concerning semistable degenerations of compact complex surfaces and the associated abstract log complex (see \cite[\S\;3]{fri}). 
This complex allows to define flat limits of the sheaves occurring in \eqref{eq:modcond2}.

\begin{definition}\label{deformation}\begin{inparaenum}[(i)]
\item Let $R$ and $\Rc$ be connected, complex analytic varieties. Let $\Delta=\{t\in\mathbb C | \mid t\mid<1\}$. 
A proper, flat morphism $\alpha:\Rc\to\Delta$  is said to be a {\rm deformation of $R$} if 
$R$ is the scheme theoretical fibre of $\alpha$ over $0$. Accordingly, $R$ is said to be a {\rm flat limit} of $R_t$, the scheme theoretical fibre of $\alpha$ over $t \neq 0$. \\
\item If there is a line bundle $\H$ on $\Rc$, set  $H:=\H_{|_R}$ and $H_t:=\H_{|_{R_t}}$ for $t \neq 0$. Then the pair $(R,H)$ is  said to be a {\rm limit} of $(R_t,H_t)$ for $t \neq 0$. \\
\item The deformation (or, equivalently, the degeneration) is \emph{semistable} if  $\Rc$ is smooth (so that we may assume that  $R_t$ is smooth for $t\neq 0$) and $R$ has at most normal crossing singularities. \\
\item  Assume that there are $\delta$ disjoint sections $s_1, \ldots , s_{\delta}$ 
of $\alpha$ and that their images $\Z_1,\ldots, \Z_\delta$ are smooth curves in $\Rc$, disjoint from $\Sing (R)$.
Set $\mathcal Z := \bigcup_{i=1}^{\delta} \Z_i$.
Then we say that $Z:=\Z_{|_R}$ is a {\rm limit} of $Z_t:=\Z_{|_{R_t}}$ for $t \neq 0$.\\
\item  If  conditions (i)-(iv) are satisfied, we will say that $(R,H, Z)$ is a {\rm semistable degeneration} of $(R_t, H_t, Z_t)$, for $t \neq 0$, or that  $(R_t, H_t, Z_t)$, for $t \neq 0$, {\rm admits the semistable degeneration} $(R,H, Z)$. 
 \end{inparaenum}
\end{definition}

Let $\mathfrak R \stackrel{\alpha}{\longrightarrow}\Delta$ be a semistable degeneration of surfaces as in Definition \ref {deformation}.  Assume that all  components of R are smooth and R has no triple point. Then $\Sing(R)$ consists of the transversal intersection points of pairs of components of $R$. 

\medskip 

Consider the sheaf $\Omega_{\mathfrak R/\Delta}(\log R)$ on
$\mathfrak R$ defined by the exact sequence (cf. \cite{ste} and \cite[\S\;3.3]{del})
\begin{equation}\label{eq:omega-log-relative}
\xymatrix{
0\ar[r] & \alpha^*(\Omega_{\Delta}(0))\ar[r]^{\iota} &  \Omega_{\mathfrak R}(\log R)\ar[r] &  
\Omega_{{\mathfrak R} / \Delta}(\log R)\ar[r] & 0.}
\end{equation}
The map $\iota$ in \eqref{eq:omega-log-relative} has rank one at every point, whence $\Omega_{{\mathfrak R} / \Delta}(\log R)$ is locally free of rank $2$,
as recalled in the following remark.
\begin{remark}\label{rem:local} Away from $R$, one has $ \Omega_{\mathfrak R}(\log R)\cong \Omega_{\mathfrak R}$ and 
$\Omega_{{\mathfrak R} /\Delta}(\log R)\cong \Omega_{{\mathfrak R} / \Delta}$. In particular
\begin{equation*}\label{eq:Omegarest}
\Omega_{{\mathfrak R} / \Delta}(\log R)\otimes
\O_{R_t}\simeq \Omega_{{R_t}}, \,\, \text{for any}\,\, t\neq 0.
\end{equation*}

\noindent Let  $P\in R-\Sing(R)$,  and let $x,\;y,\;z$ be local coordinates on ${\mathfrak R}$ around $P$. 
Let $t$ be the coordinate on $\Delta$ and assume that $\alpha$ is locally defined by $t=x$ around $p$. Then 
$ \Omega_{\mathfrak R}(\log R)$ is locally free generated by $\frac{dx}{x},\;dy,\;dz$, the map $\iota$ is defined by 
$$\frac{dt}{t} \longrightarrow \frac{dx}{x}$$ hence $\Omega_{{\mathfrak R} / \Delta}(\log R)$ is locally free generated by  
$dy,\;dz$ (cf.\;\cite[Prop.\;2.2.c]{ev}). 

\noindent Let now $P\in \Sing(R)$.  From our assumptions, we may assume that $\alpha$ is locally defined by 
$t=xy$. Then $ \Omega_{\mathfrak R}(\log R)$ is locally free generated by $\frac{dx}{x},\;\frac{dy}{y},\;dz$, 
the map $\iota$ is defined by$$\frac{dt}{t} \longrightarrow \frac{dx}{x}+\frac{dy}{y}$$ and so  
$\Omega_{{\mathfrak R} / \Delta}(\log R)$ is locally free generated by  $\frac{dx}{x}=-\frac{dy}{y},\;dz$. 
\end{remark}

Following \cite[\S\;3]{fri}, we set 
\begin{equation*}\label{lambda-uno}
\Lambda_R^1:=\Omega_{{\mathfrak R} /\Delta}(\log  R)|_R
\end{equation*} which is locally free on $R$. 

\begin{lemma}\label{prop:semic} Let $(R,H,Z)$ be a semistable degeneration of $(R_t,H_t,Z_t)$ for $t\neq 0$, as in Definition \ref{deformation}.   Assume furthermore that all components of $R$ are smooth and $R$ has no triple points. Then for all $m\in \mathbb Z$, $i\in \mathbb N$ and $t\neq 0$, one has
\begin{equation*}\label{eq:semicont}
h^i (\Lambda^1_R \* \O_R (m H + K_R) \* \I_{Z/R}) \geqslant h^i(\Omega_{R_t} \* \O_{R_t}(mH_t+ K_{R_t})\* \I_{Z_t/R_t}). 
\end{equation*}
\end{lemma}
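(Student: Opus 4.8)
The plan is to prove the inequality as an instance of upper semicontinuity of cohomology dimensions, applied to a suitable coherent sheaf on $\mathfrak R$ that is flat over $\Delta$ and whose restriction to each fibre recovers the sheaf appearing on the corresponding side of the inequality. Concretely, I would form the sheaf
\[
\mathcal F := \Omega_{\mathfrak R/\Delta}(\log R) \* \O_{\mathfrak R}(m\H + K_{\mathfrak R/\Delta}) \* \I_{\mathcal Z/\mathfrak R}
\]
on $\mathfrak R$, where $K_{\mathfrak R/\Delta} = K_{\mathfrak R} - \alpha^*K_\Delta$ is the relative dualizing divisor and $\mathcal Z = \bigcup_i \Z_i$ is the union of the section curves. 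The heart of the argument is then twofold: first, that $\mathcal F$ is flat over $\Delta$; second, that base change holds in the precise form
\[
\mathcal F \* \O_R \cong \Lambda^1_R \* \O_R(mH + K_R) \* \I_{Z/R}, \qquad
\mathcal F \* \O_{R_t} \cong \Omega_{R_t} \* \O_{R_t}(mH_t + K_{R_t}) \* \I_{Z_t/R_t} \quad (t\neq 0).
\]
Granting these, semicontinuity of $t\mapsto h^i(R_t,\mathcal F\* \O_{R_t})$ over the disc (with the central fibre being $R$ itself, since $R = \alpha^{-1}(0)$ scheme-theoretically) gives exactly the asserted inequality.

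For the base-change identifications I would proceed factor by factor. The identity $\Omega_{\mathfrak R/\Delta}(\log R)|_R = \Lambda^1_R$ is the definition recalled just before the statement, and $\Omega_{\mathfrak R/\Delta}(\log R)\* \O_{R_t} \cong \Omega_{R_t}$ is exactly Remark \ref{rem:local}; since $\Omega_{\mathfrak R/\Delta}(\log R)$ is locally free (rank $2$), tensoring commutes with restriction. For the line bundle factor, I would use that $K_{\mathfrak R/\Delta}$ restricts to $K_R$ on the central fibre (this is the standard behaviour of the relative dualizing sheaf for a semistable family, using that $R$ is a reduced normal-crossings divisor) and to $K_{R_t}$ on a smooth fibre; and $\H|_R = H$, $\H|_{R_t}=H_t$ are the defining properties of the polarization in Definition \ref{deformation}(ii). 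For the ideal-sheaf factor, the sections $s_1,\dots,s_\delta$ have images $\Z_i$ that are smooth curves disjoint from $\Sing(R)$ and meeting each fibre transversally in a single reduced point; hence $\I_{\mathcal Z/\mathfrak R}$ is, locally near each $\Z_i$, generated by coordinates cutting out a relative point, so it restricts to $\I_{Z/R}$ on $R$ and to $\I_{Z_t/R_t}$ on $R_t$, and this restriction is compatible with the previous tensor factors because near $\mathcal Z$ the surface $R$ is smooth and all sheaves in play are locally free there.

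For flatness of $\mathcal F$ over $\Delta$: $\Omega_{\mathfrak R/\Delta}(\log R)$ is locally free on $\mathfrak R$ hence flat over $\Delta$; $\O_{\mathfrak R}(m\H+K_{\mathfrak R/\Delta})$ is a line bundle; and $\I_{\mathcal Z/\mathfrak R}$ fits in $0\to \I_{\mathcal Z/\mathfrak R}\to \O_{\mathfrak R}\to \O_{\mathcal Z}\to 0$ with both $\O_{\mathfrak R}$ and $\O_{\mathcal Z}$ flat over $\Delta$ (the latter because $\mathcal Z$ is a disjoint union of sections, so $\mathcal Z\to\Delta$ is an isomorphism on each component), whence $\I_{\mathcal Z/\mathfrak R}$ is flat over $\Delta$ and has no $\Delta$-torsion; tensoring a flat sheaf with the locally free sheaves above preserves flatness. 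Once flatness is in hand, $\mathcal F\* \O_{R_t}$ is the derived restriction as well, so there are no $\mathrm{Tor}$ corrections and the base-change identifications above are literal.

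I expect the main obstacle to be the careful verification near $\mathcal Z$ and near $\Sing(R)$ that the tensor product of the three factors really restricts as claimed with no hidden torsion — i.e. confirming that $\I_{\mathcal Z/\mathfrak R}$ stays flat over $\Delta$ and that its formation commutes with the restriction maps, given that $\mathcal Z$ is disjoint from $\Sing(R)$ (so this part is genuinely elementary, the subtlety being only bookkeeping). A secondary point requiring care is pinning down that $K_{\mathfrak R/\Delta}|_R = K_R$ for the semistable central fibre; this is standard but should be cited (adjunction for the normal-crossings divisor $R\subset\mathfrak R$, cf. the references already invoked for \eqref{eq:omega-log-relative}). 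Everything else is the formal machine of cohomology-and-base-change plus Grauert/Hironaka semicontinuity on the proper map $\alpha$.
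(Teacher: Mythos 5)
Your proof is correct and follows essentially the same route as the paper: the authors likewise invoke upper semicontinuity for the flat sheaf $\Omega_{\mathfrak R/\Delta}(\log R)\*\O(m\H+K_{\mathfrak R/\Delta})\*\I_{\mathcal Z/\mathfrak R}$, citing flatness of $\Omega_{\mathfrak R/\Delta}(\log R)$ and of $\I_{\mathcal Z/\mathfrak R}$ (the latter via \cite[Prop.\;4.2.1(ii)]{Ser}). Your write-up simply makes explicit the base-change identifications and the handling of the relative dualizing sheaf that the paper leaves implicit.
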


\begin{proof} The statement  follows by semicontinuity as $\Omega_{{\mathfrak R}/\Delta}(\log R)$ is flat over $\Delta$ and the ideal sheaf $\I_{\mathcal Z/\Rc}$ is flat over $\Delta$ by \cite[Prop.\;4.2.1(ii)]{Ser}. \end{proof}

%
%

\subsection{Degenerations of differentials}\label{ss:degdiff} From now on we assume that $R:= R_1 \cup R_2$, with $R_1,R_2$  smooth, projective surfaces, with transversal intersection along a smooth, irreducible curve $E:=R_1 \cap R_2$.  Then
\begin{equation}\label{eq:adj2}
{K_R}_{|_{R_i}} = (K_{\mathfrak R}+R)_{|_{R_i}}= (K_{\mathfrak R}+R_1+R_2)_{|_{R_i}}=
K_{R_i} + E, \; i = 1,2.  
\end{equation} In this situation there are exact sequences involving $\Lambda^1_R \* \O_R (m H + K_R) \* \I_{Z/R}$, which allow us to compute its cohomology by conducting computations on $R_1$ and $R_2$. 

Consider the exact sequences 
\begin{equation}  \label{eq:es1}
 \xymatrix{ 0  \ar[r] & \Omega_{R_i} \ar[r] & \Omega_{R_i} (\log E) \ar[r]^{\hspace{0.6cm}\rho_i} & \O_E \ar[r] & 0},  \;\;\;\; i = 1,2,  
\end{equation}
and
\begin{equation}\label{eq:es2}
 \xymatrix{ 0  \ar[r] & \Omega_{R_i}(\log E) \* \O_{R_i}(-E) \ar[r] & \Omega_{R_i} \ar[r]^{r_i} & \omega_E \ar[r] & 0,}\;\;\;\; i = 1,2,  
\end{equation}where $\rho_i$ is the \emph{residue map} and $r_i$ is the \emph{trace map} of differential forms, cf. \cite[\S\;2]{ev}. 

For the reader's convenience, we recall how $\rho_i$ and $r_i$ are defined locally around a point of $E$. We may assume that locally, in an open subset of $\mathbb C^4$ with coordinates $x,y,z,t$, the equation of  $\Rc$ is 
$xy-t=0$. We let $R_1$ be given by $x=t=0$ and $R_2$ by $y=t=0$, so that $E$ is given by $x=y=t=0$. In this chart
$$\O_{R_1} \cong \CC[[y,z]], \; \Omega_{R_1} \cong
\CC[[y,z]] \; dy \+ \CC[[y,z]] \; dz,$$$$\Omega_{R_1}(\log E) \cong
\CC[[y,z]] \; \frac{dy}{y} \+ \CC[[y,z]] \; dz, \; \O_{E} \cong \CC[[z]]\; {\rm and} \; \omega_E \cong \CC[[z]] \; dz,$$
\[  \rho_1\Big(f(y,z) \frac{dy}{y} + g(y,z)  \; dz\Big)  :=  f(0,z), \qquad r_1\Big(f(y,z) \; dy + g(y,z)  \; dz\Big)  : =  g(0,z)\; dz\]
and similarly for $R_2$.

Let $a: R_1 \sqcup R_2 \to R$ be the desingularization of $R$; consider the exact sequence 
\begin{equation}\label{eq:Afascio}
\xymatrix{
0   \ar[r] &  \mathcal A  \ar[r]  &   a_* \Big(\Omega_{R_1}(\log E) \oplus \Omega_{R_2}(\log E)\Big) \ar[r]^{\hspace{1.9cm}\rho} &  \O_E \ar[r]  & 0,
} 
\end{equation}defining $\A$, where $\rho: = \rho_1 + \rho_2$. As in  \cite[Pf. of Lemma\;3.1,\;p.94-95]{fri}, $\Lambda^1_R$ fits in the exact sequence 
\begin{equation}\label{eq:Astronzo}
\xymatrix{
0   \ar[r] &    \Lambda^1_R \ar[r]  &  \mathcal A \ar[r]^{r} &  \omega_E \ar[r]  & 0,
} 
\end{equation}where $r$ is locally defined as follows:  if $$(\omega_1,\omega_2) = \Big( f_1(y,z) \frac{dy}{y} + g_1(y,z) dz,\; f_2(x,z) \frac{dx}{x} + g_2(x,z) dz\Big)$$is a local section of $\mathcal A$, then 
$$r(\omega_1, \omega_2) := \big(g_1(0,z) - g_2(0,z)\big) dz.$$

\begin{remark}\label{rem:locmanonglob} {\normalfont Notice that 
\begin{equation} \label{locmanonglob}
a_*(\Omega_{R_1} \+ \Omega_{R_2}) \hookrightarrow \mathcal A \; \;  \mbox{and} \; \; 
r|_{a_*(\Omega_{R_1} \+ \Omega_{R_2})}  = r_1 - r_2,
\end{equation}
 where $r_1$ and $r_2$ are the maps in \eqref{eq:es2}. One may verify that the map $r_i$ does not extend to a map on $\mathcal A$.
}
\end{remark}

Having in mind \eqref{eq:modcond2} and Lemma \ref{prop:semic}, we are looking for conditions ensuring  the vanishing of 
$h^i (\Lambda_R^1 \otimes \O_R(mH + K_R) \otimes \I_{Z/R}) $, for $i =0,1$. 
To this end, consider the maps 
\begin{eqnarray*}
\FF := \rho \otimes \O_R(mH + K_R) \otimes \I_{Z/R}, & & q := r \otimes \O_R(mH + K_R) \otimes \I_{Z/R}, \\
\FF_i := \rho_i \otimes \O_{R_i}(mH + K_{R_i} + E ) \otimes \I_{Z_i}  & \mbox{and} & q_i : = r_i  \otimes \O_{R_i}(mH + K_{R_i} + E ) \otimes \I_{Z_i}.
\end{eqnarray*}Then by  \eqref{eq:es1}, \eqref{eq:es2} and the fact that $Z_i$ avoids $E$, for $i=1, 2$ we have 
\begin{equation} \label{eq:C2}\footnotesize
\xymatrix{ &  & 0 \ar[d] & & \\
0   \ar[r] &  \Omega_{R_i}(\log E)(mH+K_{R_i})  \* \I_{Z_i}  \ar[r]  &   \Omega_{R_i}(mH+K_{R_i} + E)  \* \I_{Z_i}  \ar[r]^{\hspace{0.6cm}q_i} \ar[r] \ar[d] & \omega_E(mH+K_{R_i} + E) \ar[r]  & 0 \\
           &     & \Omega_{R_i}(\log E)(mH+K_{R_i} + E) \* \I_{Z_i}  \ar[d]^{\FF_i}  \ar[d] &   &  \\
&   & \O_E(mH+K_{R_i} + E) \ar[d] & & \\
&  & 0 & & 
} 
\end{equation} Moreover, by \eqref{eq:Afascio}--\eqref{locmanonglob}, we have

\begin{equation} \label{eq:C22}\footnotesize
\xymatrix{ 0   \ar[r] & \A \* \O_R(mH+K_R) \* \I_{Z/R} \ar[r] & a_* \Big(\+_{i=1}^2 \Omega_{R_i}(\log E)(mH+K_{R_i}+E) \* \I_{Z_i} \Big) \; \;  \ar[r]^{\hspace{2cm}\FF=\FF_1+\FF_2} \; \; &  \O_E(mH+K_R) \ar[r]  & 0 \\
 & & a_* \Big(\+_{i=1}^2  \Omega_{R_i}(mH+K_{R_i}+E) \* \I_{Z_i} \Big) \ar@{^(->}[u] \ar@{_(->}[d] \ar[dr]^{q_1-q_2}& & \\
0   \ar[r] &    \Lambda^1_R (mH+K_R) \*  \I_{Z/R}  \ar[r]  &  \mathcal A \* \O_R(mH+K_R) \* \I_{Z/R} \ar[r]^{\hspace{0.7cm}q} &  \omega_E(mH+K_R) \ar[r]  & 0.
} 
\end{equation}

 The next two lemmas provide sufficient conditions for the vanishings of the two first cohomology groups of $\Lambda_R^1(mH + K_R) \otimes \I_{Z/R}$.  

\begin{lemma}\label{lem:condsuff2} Assume that 
\begin{equation}\label{eq:neuer2} 
\im (H^0(\FF_1)) \cap \im (H^0(\FF_2)) =\{0\}.
 \end{equation} Then $h^0(\Lambda^1_R (mH + K_R) \otimes \I_{Z/R}) = 0$ if and only if
 \begin{equation}\label{eq:neuer4} \im (H^0(q_1)) \cap \im (H^0(q_2))= \{  0 \} 
\end{equation} and
\begin{equation}\label{eq:neuer3} h^0(\Omega_{R_i} (\log E) (mH+K_{R_i}) \* \I_{Z_i})=0\,\,\, \text{for}  \; \; i=1,2.
\end{equation}
\end{lemma}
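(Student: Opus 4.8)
The plan is to extract the vanishing $h^0(\Lambda^1_R(mH+K_R)\otimes\I_{Z/R})=0$ from a diagram chase on the two exact sequences making up \eqref{eq:C22}, together with the four exact sequences in \eqref{eq:C2} (two for each $i$). I would work entirely at the level of $H^0$, since that is all the statement concerns, and keep track of images of global sections under the various maps $\FF_i$, $q_i$, $q$, $\FF$.

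First I would take global sections of the lower row of \eqref{eq:C22}:
\[
0 \longrightarrow H^0(\Lambda^1_R(mH+K_R)\otimes\I_{Z/R}) \longrightarrow H^0(\mathcal A\otimes\O_R(mH+K_R)\otimes\I_{Z/R}) \stackrel{H^0(q)}{\longrightarrow} H^0(\omega_E(mH+K_R)),
\]
so the desired vanishing is equivalent to the injectivity of $H^0(q)$. Next I would identify $H^0(\mathcal A\otimes\O_R(mH+K_R)\otimes\I_{Z/R})$ using the top row of \eqref{eq:C22}: it is the kernel of $H^0(\FF)=H^0(\FF_1)+H^0(\FF_2)$ acting on $H^0(\bigoplus_i \Omega_{R_i}(\log E)(mH+K_{R_i}+E)\otimes\I_{Z_i})$ (there is no $H^1$ obstruction to worry about here because we only need the kernel description, i.e.\ left-exactness). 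Under hypothesis \eqref{eq:neuer2}, a pair $(\omega_1,\omega_2)$ lies in this kernel precisely when $\FF_1(\omega_1)=-\FF_2(\omega_2)$ in $H^0(\O_E(mH+K_R))$, and since the two images meet only in $0$, this forces $\FF_i(\omega_i)=0$ separately, i.e.\ each $\omega_i$ comes from $H^0(\Omega_{R_i}(mH+K_{R_i}+E)\otimes\I_{Z_i})$ via the vertical inclusion in \eqref{eq:C2}. Thus \eqref{eq:neuer2} collapses $H^0(\mathcal A\otimes\O_R(mH+K_R)\otimes\I_{Z/R})$ onto $H^0(\bigoplus_i\Omega_{R_i}(mH+K_{R_i}+E)\otimes\I_{Z_i})$, and on this subspace $q$ restricts to $q_1-q_2$ by Remark \ref{rem:locmanonglob} (the displayed identity \eqref{locmanonglob}).

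With this identification in hand, injectivity of $H^0(q)$ becomes injectivity of $H^0(q_1)-H^0(q_2)$ on $H^0(\Omega_{R_1}(mH+K_{R_1}+E)\otimes\I_{Z_1})\oplus H^0(\Omega_{R_2}(mH+K_{R_2}+E)\otimes\I_{Z_2})$. A pair $(\omega_1,\omega_2)$ is in the kernel iff $H^0(q_1)(\omega_1)=H^0(q_2)(\omega_2)$, i.e.\ their images agree in $H^0(\omega_E(mH+K_R))$; condition \eqref{eq:neuer4} then forces each $H^0(q_i)(\omega_i)=0$, so each $\omega_i$ lifts to $H^0(\Omega_{R_i}(\log E)(mH+K_{R_i})\otimes\I_{Z_i})$ by the horizontal row of \eqref{eq:C2}; and condition \eqref{eq:neuer3} kills those lifts, giving $\omega_i=0$. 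This proves the "if" direction. For the converse, I would run the implications backwards: if $h^0(\Lambda^1_R(mH+K_R)\otimes\I_{Z/R})=0$ then $H^0(q)$ is injective, hence (using \eqref{eq:neuer2} to identify its source as above) $H^0(q_1)-H^0(q_2)$ is injective on the direct sum, which immediately yields both \eqref{eq:neuer4} (take $\omega_i$ in the respective images with equal $q_i$-values, built from the two summands) and \eqref{eq:neuer3} (restrict to one summand at a time and use the row of \eqref{eq:C2} to see that a nonzero element of $H^0(\Omega_{R_i}(\log E)(mH+K_{R_i})\otimes\I_{Z_i})$ would map to a nonzero kernel element of $q_i$, hence of $q_1-q_2$).

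The main obstacle I anticipate is bookkeeping rather than any deep input: one must be careful that the vertical and horizontal sequences in \eqref{eq:C2} are compatibly twisted and that the restriction of the globally-defined map $q$ to the subsheaf $a_*(\bigoplus_i\Omega_{R_i}(\cdots)\otimes\I_{Z_i})$ really is $q_1-q_2$ and not $q_1+q_2$ — this sign is exactly the content of \eqref{locmanonglob} and must be invoked explicitly. A secondary subtlety is that \eqref{eq:neuer2} is used twice in slightly different guises (once to identify the source of $H^0(q)$, and implicitly it guarantees the relevant connecting maps behave), so I would state once and for all at the start that throughout we assume \eqref{eq:neuer2}. No higher cohomology vanishing is needed, since every reduction is through a left-exact piece of a short exact sequence; this is what makes the "if and only if" clean.
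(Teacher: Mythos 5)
Your proposal is correct and follows essentially the same route as the paper: reduce the vanishing to injectivity of $H^0(q)$ via the bottom row of \eqref{eq:C22}, use \eqref{eq:neuer2} and the vertical sequences in \eqref{eq:C2} to identify $H^0(\mathcal A \otimes \O_R(mH+K_R)\otimes \I_{Z/R})=\ker(H^0(\FF))$ with $\bigoplus_i H^0(\Omega_{R_i}(mH+K_{R_i}+E)\otimes\I_{Z_i})$, and then observe that the kernel of $H^0(q_1)-H^0(q_2)$ on this direct sum vanishes exactly when \eqref{eq:neuer4} holds and each $\ker(H^0(q_i))\cong H^0(\Omega_{R_i}(\log E)(mH+K_{R_i})\otimes\I_{Z_i})$ vanishes. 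The paper packages your element-wise chase into the single isomorphism $\ker(H^0(q))\cong\bigl(\oplus_i\ker(H^0(q_i))\bigr)\oplus\bigl(\im(H^0(q_1))\cap\im(H^0(q_2))\bigr)$, but the content is identical.
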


\begin{proof} By \eqref{eq:C22}, we have that $h^0 (\Lambda^1_R  (mH + K_R) \otimes \I_{Z/R}) = 0$ if and only if
$H^0(q)$  is injective. Moreover, as $H^0(\FF) = H^0(\FF_1) + H^0(\FF_2)$, we get from  \eqref{eq:neuer2}, \eqref{eq:C22} and the vertical sequence in \eqref{eq:C2} that 
$$H^0(\mathcal A \otimes \O_R(mH + K_R) \otimes \I_{Z/R}) = \ker(H^0(\FF)) = \bigoplus_{i=1}^2 \ker (H^0(\FF_i)) = \bigoplus_{i=1}^2 H^0(\Omega_{R_i}(mH+K_{R_i} + E) \* \I_{Z_i}).$$ Hence, by \eqref{eq:C22} again, we have $H^0(q) = H^0(q_1) + H^0(q_2)$.
The statement now follows by the isomorphism $\ker(H^0(q))\simeq\Big(\oplus_i\ker(H^0(q_i)\Big)\bigoplus  \Big(\im (H^0(q_1)) \cap \im (H^0(q_2))\Big)$
and the horizontal sequence in \eqref{eq:C2}.
\end{proof}

\begin{lemma} \label{lem:condsuff} 
 Assume that 
\begin{equation}
  \label{eq:buffon1}
 \sum_{i=1}^2 H^0 (q_i) \; \; \mbox{is surjective}, 
\end{equation}
\begin{equation}
  \label{eq:buffon2}
\sum_{i=1}^2 H^0 (\FF_i) \; \; \mbox{is surjective}
\end{equation}
and
\begin{equation}
  \label{eq:buffon3}
h^1(\Omega_{R_i}(\log E)(mH+K_{R_i} + E) \* \I_{Z_i}) = 0, \; \; \mbox{for $i=1,2$}.\end{equation}
Then 
$h^1(\Lambda^1_R  (mH + K_R) \otimes \I_{Z/R}) = 0$. 
 \end{lemma}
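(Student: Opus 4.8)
The strategy mirrors the proof of Lemma~\ref{lem:condsuff2}, but now chases the long exact cohomology sequences for the first cohomology rather than the zeroth. First I would take cohomology of the horizontal sequence in \eqref{eq:C22}, namely
\[
\xymatrix{
0 \ar[r] & \Lambda^1_R(mH+K_R)\*\I_{Z/R} \ar[r] & \A\*\O_R(mH+K_R)\*\I_{Z/R} \ar[r]^{\hspace{1.3cm}q} & \omega_E(mH+K_R) \ar[r] & 0,
}
\]
which gives the exact piece
\[
H^0(q) \longrightarrow H^0(\omega_E(mH+K_R)) \longrightarrow H^1(\Lambda^1_R(mH+K_R)\*\I_{Z/R}) \longrightarrow H^1(\A\*\O_R(mH+K_R)\*\I_{Z/R}) \longrightarrow H^1(\omega_E(mH+K_R)).
\]
So it suffices to show two things: that $H^0(q)$ is surjective onto $H^0(\omega_E(mH+K_R))$, and that $H^1(\A\*\O_R(mH+K_R)\*\I_{Z/R})=0$. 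For the first, I would argue exactly as in Lemma~\ref{lem:condsuff2}: the hypothesis \eqref{eq:buffon2} that $\sum_i H^0(\FF_i)$ is surjective forces, via the vertical sequence in \eqref{eq:C2} and the top row of \eqref{eq:C22}, the identification $H^0(\A\*\O_R(mH+K_R)\*\I_{Z/R})=\bigoplus_i H^0(\Omega_{R_i}(mH+K_{R_i}+E)\*\I_{Z_i})$, whence $H^0(q)=\sum_i H^0(q_i)$ (using $q|_{a_*(\+\Omega_{R_i})}=q_1-q_2$ from \eqref{locmanonglob}); then $H^0(q)$ surjective is precisely hypothesis \eqref{eq:buffon1}.

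For the vanishing $H^1(\A\*\O_R(mH+K_R)\*\I_{Z/R})=0$ I would use the top row of \eqref{eq:C22}:
\[
\xymatrix{
0 \ar[r] & \A\*\O_R(mH+K_R)\*\I_{Z/R} \ar[r] & a_*\bigl(\+_i \Omega_{R_i}(\log E)(mH+K_{R_i}+E)\*\I_{Z_i}\bigr) \ar[r]^{\hspace{2.6cm}\FF} & \O_E(mH+K_R) \ar[r] & 0.
}
\]
Since $a$ is finite, $a_*$ is exact and preserves cohomology, so the associated long exact sequence reads
\[
H^0(\FF) \longrightarrow H^0(\O_E(mH+K_R)) \longrightarrow H^1(\A\*\O_R(mH+K_R)\*\I_{Z/R}) \longrightarrow \bigoplus_{i=1}^2 H^1\bigl(\Omega_{R_i}(\log E)(mH+K_{R_i}+E)\*\I_{Z_i}\bigr) \longrightarrow \cdots.
\]
The rightmost term vanishes by hypothesis \eqref{eq:buffon3}, and $H^0(\FF)=\sum_i H^0(\FF_i)$ is surjective onto $H^0(\O_E(mH+K_R))$ by hypothesis \eqref{eq:buffon2}; hence $H^1(\A\*\O_R(mH+K_R)\*\I_{Z/R})=0$. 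Combining the two displayed facts in the first sequence yields $H^1(\Lambda^1_R(mH+K_R)\*\I_{Z/R})=0$, as desired.

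The one point that needs a little care — and which I expect to be the main (if modest) obstacle — is bookkeeping the two overlapping four-term sequences in \eqref{eq:C2} and \eqref{eq:C22} correctly, in particular making sure that the map $\FF=\FF_1+\FF_2$ on global sections is the one whose surjectivity is assumed in \eqref{eq:buffon2} (the source being the direct sum over $i$, so that $H^0(\FF)=H^0(\FF_1)+H^0(\FF_2)$ as submodules of $H^0(\O_E(mH+K_R))$ after restricting to the two components), and likewise that $q$ restricted to $a_*(\+_i\Omega_{R_i}(\cdots))$ is $q_1-q_2$ rather than $q_1+q_2$, as recorded in Remark~\ref{rem:locmanonglob}. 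Once these identifications are pinned down, the argument is a routine diagram chase; no vanishing theorem or positivity input is needed beyond the three hypotheses.
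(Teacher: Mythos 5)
Your overall strategy is correct and is precisely what the paper's one-line proof (``This follows from \eqref{eq:C22}'') intends: chase the two horizontal sequences of \eqref{eq:C22} in cohomology, using \eqref{eq:buffon2} and \eqref{eq:buffon3} to kill $H^1(\A\*\O_R(mH+K_R)\*\I_{Z/R})$, and \eqref{eq:buffon1} to get surjectivity of $H^0(q)$. One intermediate claim, however, is not justified as stated: the identification $H^0(\A\*\O_R(mH+K_R)\*\I_{Z/R})=\+_{i}H^0(\Omega_{R_i}(mH+K_{R_i}+E)\*\I_{Z_i})$ is what the proof of Lemma \ref{lem:condsuff2} deduces from its hypothesis \eqref{eq:neuer2}, namely $\im(H^0(\FF_1))\cap\im(H^0(\FF_2))=\{0\}$; it does \emph{not} follow from the surjectivity hypothesis \eqref{eq:buffon2}, since $\ker\big(H^0(\FF_1)+H^0(\FF_2)\big)$ can be strictly larger than $\+_i\ker(H^0(\FF_i))$ when the two images overlap. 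Fortunately equality is not needed: the sheaf inclusion in the middle of \eqref{eq:C22} always gives $\+_{i}H^0(\Omega_{R_i}(mH+K_{R_i}+E)\*\I_{Z_i})\subseteq H^0(\A\*\O_R(mH+K_R)\*\I_{Z/R})$, and on this subspace $H^0(q)$ acts as $H^0(q_1)-H^0(q_2)$ by \eqref{locmanonglob}, whose image is $\im(H^0(q_1))+\im(H^0(q_2))$ (the sign is irrelevant to the image). Hypothesis \eqref{eq:buffon1} then already forces $H^0(q)$ to be surjective, and the rest of your chase goes through verbatim. With that one sentence repaired, the argument is complete and coincides with the paper's.
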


\begin{proof} This follows from \eqref{eq:C22}.
\end{proof}

We summarize our main results as follows:

\begin{prop} \label{thm:main} Let $S'$ be a smooth, projective surface, $H'$ a line bundle on $S'$ and 
$Z' \subset S'$ a reduced zero-dimensional scheme. Assume that $(S',H',Z')$ admits a  semistable degeneration $(R,H,Z)$ with $R=R_1\cup R_2$, 
 $R_1,R_2$  smooth, with transversal intersection $E=R_1 \cap R_2$, and $Z=Z_1\cup Z_2$, with $Z_i \subset R_i-E$, $i=1,2$. 

 If \eqref{eq:neuer2}, \eqref{eq:neuer4} and \eqref{eq:neuer3} hold,  one has $h^0(\Omega_{S'}(mH'+K_{S'}) \* \I_{Z'/S'}) =0$.

 If \eqref{eq:buffon1}, \eqref{eq:buffon2} and \eqref{eq:buffon3} hold,  one has $h^1(\Omega_{S'}(mH'+K_{S'}) \* \I_{Z'/S'}) =0$. 
\end{prop}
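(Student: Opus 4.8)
The plan is to deduce Proposition~\ref{thm:main} by combining the semicontinuity statement of Lemma~\ref{prop:semic} with the cohomological vanishing criteria of Lemmas~\ref{lem:condsuff2} and~\ref{lem:condsuff}, after reconciling the notation of the Proposition (where the surface is called $S'$ and its polarization $H'$) with that of the earlier subsection (where the degenerate surface carries $H = \H_{|R}$ with $\H_{|R_t}=H_t$). First I would observe that the hypotheses of the Proposition are exactly those of \S\ref{ss:degdiff}: $R = R_1 \cup R_2$ with smooth components meeting transversally along the smooth irreducible curve $E$, and $Z = Z_1 \cup Z_2$ with each $Z_i \subset R_i - E$ a reduced zero-dimensional scheme avoiding the singular locus; so all the exact sequences \eqref{eq:es1}--\eqref{eq:C22} are available, and the sheaves $\FF_i, q_i$ are defined as there. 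Writing $S' = R_{1}$ in the role of the ``generic fiber'' is not quite right; rather one applies the degeneration with $R_t \cong S'$, $H_t \cong H'$, $Z_t \cong Z'$, so that Lemma~\ref{prop:semic} gives, for each $i \in \{0,1\}$,
\[
h^i\big(\Lambda^1_R \* \O_R(mH + K_R) \* \I_{Z/R}\big) \;\geqslant\; h^i\big(\Omega_{S'}(mH' + K_{S'}) \* \I_{Z'/S'}\big).
\]
Since all groups involved are finite-dimensional and nonnegative, it therefore suffices to prove that the left-hand side vanishes in the two cases.

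Next I would treat the two assertions separately. For the first, assuming \eqref{eq:neuer2}, \eqref{eq:neuer4} and \eqref{eq:neuer3}, Lemma~\ref{lem:condsuff2} gives directly $h^0(\Lambda^1_R(mH+K_R) \* \I_{Z/R}) = 0$; combined with the semicontinuity inequality above for $i=0$ and the nonnegativity of $h^0$, this forces $h^0(\Omega_{S'}(mH'+K_{S'}) \* \I_{Z'/S'}) = 0$. For the second, assuming \eqref{eq:buffon1}, \eqref{eq:buffon2} and \eqref{eq:buffon3}, Lemma~\ref{lem:condsuff} gives $h^1(\Lambda^1_R(mH+K_R) \* \I_{Z/R}) = 0$, and the same semicontinuity argument for $i=1$ yields $h^1(\Omega_{S'}(mH'+K_{S'}) \* \I_{Z'/S'}) = 0$. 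The bulk of the genuine content has thus been pushed into the two preceding lemmas, so the ``proof'' of the Proposition is essentially an assembly argument.

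The one point that requires a little care — and which I expect to be the main (mild) obstacle — is making sure the adjunction bookkeeping in \eqref{eq:adj2} is used correctly, i.e. that the line bundle appearing on $R$ in Lemma~\ref{prop:semic}, namely $\O_R(mH + K_R)$, restricts on $R_i$ to $\O_{R_i}(mH + K_{R_i} + E)$, which is precisely the twist used to define $\FF_i$ and $q_i$ in \S\ref{ss:degdiff}; this is what allows the hypotheses \eqref{eq:neuer2}--\eqref{eq:buffon3}, stated on $R_1$ and $R_2$, to feed into Lemmas~\ref{lem:condsuff2} and~\ref{lem:condsuff}. One should also note that $\H$ extending $H'$ exists by hypothesis (it is part of the definition of a semistable degeneration of the triple $(R_t,H_t,Z_t)$ in Definition~\ref{deformation}), and that $\I_{\mathcal Z/\Rc}$ is flat over $\Delta$, as already invoked in the proof of Lemma~\ref{prop:semic}. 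Granting these, the proof is:

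\begin{proof}
Apply Lemma~\ref{prop:semic} to the semistable degeneration $(R,H,Z)$ of $(S',H',Z')$: for $i = 0, 1$,
\[
h^i\big(\Lambda^1_R \* \O_R(mH + K_R) \* \I_{Z/R}\big) \;\geqslant\; h^i\big(\Omega_{S'}(mH' + K_{S'}) \* \I_{Z'/S'}\big) \;\geqslant\; 0.
\]
If \eqref{eq:neuer2}, \eqref{eq:neuer4} and \eqref{eq:neuer3} hold, Lemma~\ref{lem:condsuff2} gives $h^0(\Lambda^1_R(mH+K_R) \* \I_{Z/R}) = 0$, whence $h^0(\Omega_{S'}(mH'+K_{S'}) \* \I_{Z'/S'}) = 0$ by the displayed inequality with $i = 0$. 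If \eqref{eq:buffon1}, \eqref{eq:buffon2} and \eqref{eq:buffon3} hold, Lemma~\ref{lem:condsuff} gives $h^1(\Lambda^1_R(mH+K_R) \* \I_{Z/R}) = 0$, whence $h^1(\Omega_{S'}(mH'+K_{S'}) \* \I_{Z'/S'}) = 0$ by the displayed inequality with $i = 1$.
\end{proof}
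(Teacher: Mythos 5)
Your proposal is correct and follows exactly the paper's own (very terse) proof, which likewise just invokes Lemmas \ref{lem:condsuff2}, \ref{lem:condsuff} and the semicontinuity of Lemma \ref{prop:semic}; your added remarks on the adjunction bookkeeping in \eqref{eq:adj2} and on identifying $(R_t,H_t,Z_t)$ with $(S',H',Z')$ are the right points of care and are consistent with the paper's setup.
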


\begin{proof}  This follows from the two preceding lemmas and Lemma \ref{prop:semic}. 
\end{proof}

%
%
\section{The $K3$ case}\label{S:K3case}

 In the rest of the note, we will show how  to apply  Proposition \ref{thm:main}  to  semistable degenerations of  smooth, primitively polarized $K3$ surfaces,  thus giving, via  \eqref{eq:modcond2},  a new proof of Theorem \ref{thm:mappamod}.

\subsection{A semistable degeneration of K3 surfaces} This degeneration is well known (see \cite {clm}) and we recall it to fix notation.

Let $p=2n+\varepsilon \geqslant3$ be an integer, with $n \geqslant 1$ and $\varepsilon\in \{0,1\}$, and let 
$E' \subset \PP^p$ be a smooth, elliptic normal curve of degree $p+1$.  Consider two general line bundles $L_1, L_2 \in \Pic^2(E') $ with $L_1\neq L_2$. In particular there is no relation between $L_1,L_2$ and $\O_{E'}(1)$ in $\Pic(E')$. 

We denote by $R'_1$ and $R'_2$ the rational normal scrolls of degree $p-1$ in $\PP^p$ described by the secant lines of $E'$ generated by the divisors in $|L_1|$ and $|L_2|$, respectively. We have 
\[ R'_i \cong  \mathbb{F}_{1-\varepsilon} = 
\begin{cases} 
\PP^1 \times \PP^1 &  \; \mbox{if $p=2n+1$},  \\ 
 \mathbb{F}_1 &  \; \mbox{if $p=2n$}
\end{cases} 
\]($\mathbb{F}_{1-\varepsilon}$ is called the \emph{type} of the scrolls $R'_1$ and $R'_2$) and $R'_1$ and $R'_2$ transversely intersect along $E'$, which is anticanonical on both (cf.\,\cite{clm}). 

Denoting by $\mathfrak{p}_i: R_i' \to \PP^1$ the structural morphism  and by $\sigma_i$ and $F_i$ 
a section with minimal self--intersection and a fiber of $\mathfrak{p}_i$, respectively, we have 
$\sigma_i^2 = \varepsilon -1$, $\sigma_i \cdot F_i = 1,$ $F_i^2 = 0$ and $\Pic(R_i') \cong \mathbb{Z} [\sigma_i] \oplus \mathbb{Z} [F_i]$. 
One has 
   \begin{equation}\label{eq:A}
\O_{R'_i}(1) \simeq \O_{R'_i}(\sigma_i + nF_i)\,\,\, \text{and}\,\,\, K_{R_i'} \sim - 2 \sigma_i - (3-\varepsilon) F_i\sim -E', 
\end{equation}
and $\Omega_{R_i'}$ fits in the exact sequence
\begin{equation}\label{eq:A1.1}
\xymatrix{ 
0 \ar[r] & \mathfrak{p}_i^*(\omega_{\PP^1})\cong \O_{R_i'} (-2 F_i) \ar[r] & \Omega_{R_i'} \ar[r] & \Omega_{R_i'/\PP^1} \cong  \O_{R_i'} (- E' + 2 F_i) \ar[r] & 0,
}
\end{equation}which splits if $\varepsilon=1$. 

Set  $R':= R_1' \cup R_2'.$  The \emph{first cotangent sheaf} $T^1_{R'}$ (cf. \cite[\S\;1]{fri})  is the degree $16$ line bundle on $E'$
 \begin{equation}
  \label{eq:B}
T^1_{R'}\cong \N_{E'/R'_1}\otimes \N_{E'/R'_2}\cong \O_{E'}(4) \otimes (L_1 \otimes L_2)^ {\otimes (3-p)},
\end{equation} the last isomorphism coming from \eqref{eq:A}. 

The Hilbert point of  $R'$ sits in the smooth locus of the component $\mathcal X_p$ of the Hilbert scheme whose general point represents a smooth $K3$ surface of degree $2p-2$ in $\PP^p$ having Picard group generated by the hyperplane section 
(cf.\,\cite[Thms.\,1,\,2]{clm}).
 
The fact that $T^1_{R'}$ is non-trivial on $E'$ implies that $R'$ does not admit any semistable deformation (cf. \cite[Prop.\,1.11]{fri}). Indeed, the total space of a general  flat deformation of $R'$ in $\mathbb P^p$ is singular along $16$ points on $E'$ 
that are the zeros of a global section of $T^1_{R'}$ (cf.\,\cite[\S\;2]{fri}). More precisely
if $\Rc' \stackrel{\alpha'}{\longrightarrow} \Delta$ is a (general) embedded deformation of 
$ R'$ in $\mathbb P^p$ corresponding to a (general) section $\tau\in H^ 0(R', \mathcal N_{R'/\PP^ p})$, then the total space $\Rc'$ has  {double points} at the 16 (distinct) points of the divisor $W\in \vert T^1_{R'}\vert$, with 
\[
W:=\{s_\tau=0\} \quad \text{where}\quad \tau\in H^ 0(\mathcal N_{R'/\PP^ p})\overset{\kappa} {\longrightarrow} s_\tau:=\kappa(\tau)\in H^ 0(T^1_{R'}),
\]the map $\kappa$ being {surjective} (see \cite[Cor.\;1]{clm}).
 By blowing up $\Rc'$ along these singular points and  contracting every exceptional divisor on one of the two irreducible components of the strict transform of $R'$, one obtains
a small resolution of singularities $\Pi: \Rc \to \Rc' $  and a 
semistable degeneration $\Rc \stackrel{\alpha}{\longrightarrow}\Delta$ of $K3$ surfaces,  
with central fiber $R= R_1\cup R_2$, where $R_i=\Pi^{-1}(R'_i)$, $i =1,2$.  
Then $E:= R_1 \cap R_2 = {\rm Sing}(R)$ is such that $E'=\Pi(E) \cong E$ and $T^1_{R}\simeq \O_E$.  The curve $E'$ [resp.~ $E$] is anticanonical on $R'_i$ [resp.~ on $R_i$], for $i=1,2$, hence both $R'$ and $R$ have trivial dualizing sheaf (see \cite[Rem.\,2.11]{fri}).  
On $\Rc'$ there is a line bundle $\H'$ restricting to the hyperplane bundle on each fiber. We set $\H=\Pi^{*}(\H')$. 

The map $\pi_i:=\Pi_{|R_i}:R_i \to R'_i$ is the contraction of $k_i$  disjoint $(-1)$-curves 
$\ee_{i,1},\ldots, \ee_{i,{k_i}}$, such that $\ee_{i,j} \cdot E=1$, to distinct points $x_{i,1},\ldots, x_{i,k_i}$ on $E'$.  We set $W_i=x_{i,1}+\cdots+ x_{i,k_i}$ and $\ee_i:=\sum_{j=1}^{k_i}\ee_{i,j}$, for $i =1,2$.
Then $W= W_1 + W_2 \in |T^1_{R'}|$ is general, hence reduced. 

If $\Rc' \stackrel{\alpha'}{\longrightarrow} \Delta$ is general in the above sense, we will accordingly say that $\Rc \stackrel{\alpha}{\longrightarrow}\Delta$ is \emph{general} and $(R_t,H_t)$, for $t\neq 0$, can be thought of as the general point of $\mathcal K_p$.

\subsection{Technical lemmas} 
We will now develop tools to verify the conditions \eqref{eq:neuer2}-\eqref{eq:neuer3} and \eqref{eq:buffon1}-\eqref{eq:buffon3}  in Proposition \ref{thm:main}. 


Consider the relative cotangent sequence of the map $\pi_i: R_i\to R'_i$, together with the dual of the exact 
sequence defining its normal sheaf $\N_{\pi_i}$ (c.f. e.g. \cite[Ex.\;3.4.13(iv)]{Ser}); we have $\Omega_{R_i/R'_i} \simeq  \Shext^1(\N_{\pi_i}, \O_{R_i}) \simeq  \Shext^1(\O_{\ee_i}(-\ee_i), \O_{R_i})$. 
One easily verifies that $\Shext^1(\O_{\ee_i}(-\ee_i), \O_{R_i})\cong \oplus_{j=1}^{k_i} \omega_{\ee_{i,j}}$, whence  
\begin{equation}\label{eq:Aexbl}
 \xymatrix{ 0  \ar[r] & \pi_i^*(\Omega_{R'_i}) \ar[r] & \Omega_{R_i} \ar[r] & 
\Omega_{R_i/R'_i}  \cong 
\+_{j=1}^{k_i} \omega_{\ee_{i,j}} \ar[r] & 0.}
\end{equation}

On each $(-1)$--curve $\ee_{i,j}$ on $R_i$, with $i=1,2$ and $j=1,\ldots, k_i$, we can consider the two points $Y_{i,j}$ and $Y'_{i,j}$ respectively cut out on $\ee_{i,j}$ by the strict transform on $R_i$ of the ruling of $R'_i$ through $x_{i,j}$ and by $E$. Note that $Y_{i,j}=Y'_{i,j}$ if and only if $x_{i,j}$ is a ramification point on $E$ of the linear series $|L_i|$. This will not be the case if $W\in |T^1_{R'}|$ is general. We will consider the 0--dimensional scheme $Y_i=\sum_{j=1}^ {k_j} Y_{i,j}$ on $R_i$, for $i=1,2$. 
Since  $\pi_i(Y_{i,j})= \pi_i (Y'_{i,j})=x_{i,j}$, we have $W_i=\pi_i(Y_i)$, for $i=1,2$. 
Then:

\begin{lemma} \label{lemma:Aaiutino} Let $W \in |T^1_{R'}|$ be general. We have an exact sequence
\begin{equation}\label{eq:A2.10}
\xymatrix{ 
0 \ar[r] & \O_{R_i}(-2\pi_i^*(F_i)) \ar[r] & \Omega_{R_i} \ar[r] &  \O_{R_i}(-E+2\pi_i^*(F_i)) \* \I_{Y_i} 
 \ar[r] & 0.}
\end{equation}
\end{lemma}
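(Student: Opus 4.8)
The goal is to combine the relative cotangent sequence \eqref{eq:Aexbl} with the two descriptions \eqref{eq:A1.1} and \eqref{eq:A} of $\Omega_{R_i'}$, pushed forward to $R_i$, in order to identify the sub- and quotient line bundles in \eqref{eq:A2.10}. The plan is to start from \eqref{eq:Aexbl}, namely
\[
0 \to \pi_i^*(\Omega_{R_i'}) \to \Omega_{R_i} \to \+_{j=1}^{k_i}\omega_{\ee_{i,j}} \to 0,
\]
and to pull back \eqref{eq:A1.1} under $\pi_i$. Pulling back gives
\[
0 \to \O_{R_i}(-2\pi_i^*(F_i)) \to \pi_i^*(\Omega_{R_i'}) \to \pi_i^*\O_{R_i'}(-E'+2F_i) \to 0,
\]
using $\pi_i^*(\omega_{\PP^1})\cong \O_{R_i'}(-2F_i)$ and that $\pi_i^*$ is exact here because $\Omega_{R_i'/\PP^1}$ is locally free. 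Since $E'=\Pi(E)$ and $\pi_i=\Pi_{|R_i}$ contracts the $\ee_{i,j}$ to points on $E'$, one has $\pi_i^*(E')=E+\ee_i$ (the strict transform $E$ plus the total exceptional divisor $\ee_i=\sum_j\ee_{i,j}$), so $\pi_i^*\O_{R_i'}(-E'+2F_i)\cong \O_{R_i}(-E-\ee_i+2\pi_i^*(F_i))$.

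The first step is to build a commutative diagram (snake/pushout) combining the two sequences above: the subsheaf $\O_{R_i}(-2\pi_i^*F_i)$ of $\pi_i^*\Omega_{R_i'}$ is also a subsheaf of $\Omega_{R_i}$ via \eqref{eq:Aexbl}, and the cokernel $\Omega_{R_i}/\O_{R_i}(-2\pi_i^*F_i)$ then fits in an extension of $\+_j\omega_{\ee_{i,j}}$ by $\O_{R_i}(-E-\ee_i+2\pi_i^*F_i)$. So the sub-bundle in \eqref{eq:A2.10} is already identified as $\O_{R_i}(-2\pi_i^*F_i)$, and it remains to identify that cokernel with $\O_{R_i}(-E+2\pi_i^*F_i)\*\I_{Y_i}$. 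The natural candidate is clear: $\O_{R_i}(-E+2\pi_i^*F_i)\*\I_{Y_i}$ sits in
\[
0 \to \O_{R_i}(-E+2\pi_i^*F_i)\*\I_{Y_i/R_i} \to \O_{R_i}(-E+2\pi_i^*F_i) \to \O_{Y_i} \to 0,
\]
and I would show that twisting this down by the effective divisor $\ee_i$ and using $\O_{\ee_i}(-\ee_i)\cong\+_j\O_{\ee_{i,j}}(1)$ produces exactly the extension of $\+_j\omega_{\ee_{i,j}}$ by $\O_{R_i}(-E-\ee_i+2\pi_i^*F_i)$ obtained above — the point being a local computation at each $\ee_{i,j}$ comparing, on the one hand, the Jacobian of the blow-up $\pi_i$ (whose cokernel along $\ee_{i,j}$ is $\omega_{\ee_{i,j}}$ pinned down by \eqref{eq:Aexbl}) and, on the other hand, the zero $Y_{i,j}$ of the section of $\O_{R_i}(-E'+2F_i)$ pulled back to $\ee_{i,j}$, i.e. the scheme $\I_{Y_i}$. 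The colength of $\I_{Y_i}$ is $k_i = \deg Y_i$, matching the length of $\+_j\O_{Y_{i,j}}$, and the twist by $2\pi_i^*F_i$ rather than $2F_i$ accounts precisely for the discrepancy between pulling back $F_i$ and its strict transform.

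The main obstacle will be the local/geometric identification of the point $Y_{i,j}\in\ee_{i,j}$: one must check that the cokernel $\omega_{\ee_{i,j}}$ of $\pi_i^*\Omega_{R_i'}\hookrightarrow\Omega_{R_i}$, after the above manipulations, is supported at precisely the point where the strict transform of the ruling $|F_i|$ through $x_{i,j}$ meets $\ee_{i,j}$, and that this is distinct from $Y'_{i,j}=E\cap\ee_{i,j}$ for general $W\in|T^1_{R'}|$ (so that $\I_{Y_i}$ is a reduced, honest ideal sheaf of points away from the relevant intersections, and the displayed sequence \eqref{eq:A2.10} is genuinely a sequence of sheaves on $R_i$, not merely a complex). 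Concretely, in local coordinates $(u,v)$ on $R_i'$ near $x_{i,j}$ with $F_i=\{u=0\}$ and blow-up coordinates on $R_i$, I expect $\Omega_{R_i}$ to be generated near $\ee_{i,j}$ by $du$ and $d(v/u)$ (or the analogous chart), and the inclusion of $\pi_i^*\Omega_{R_i'}=\langle du, dv\rangle$ to have cokernel generated by $d(v/u)$ restricted to $\ee_{i,j}=\{u=0\}$; tracing the identification $\Omega_{R_i'/\PP^1}\cong\O_{R_i'}(-E'+2F_i)$ through this shows the ideal cut out is exactly $\I_{Y_{i,j}}$ with $Y_{i,j}$ the stated point. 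Once this local matching is in hand, \eqref{eq:A2.10} follows by assembling the global diagram and the five lemma. Throughout, generality of $W$ is used only to guarantee $Y_{i,j}\neq Y'_{i,j}$ (equivalently $x_{i,j}$ not a ramification point of $|L_i|$), as noted before the statement.
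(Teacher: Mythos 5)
Your proposal is correct and essentially reproduces the paper's own argument: the paper likewise takes the composite of (the pullback of) \eqref{eq:A1.1} with \eqref{eq:Aexbl}, identifies the cokernel as $\O_{R_i}(-E+2\pi_i^*(F_i))\otimes\I_{Y}$ for some length-$k_i$ scheme $Y$ on $\ee_i$ by a determinantal argument (torsion-free cokernel with $c_2=1$) rather than your slightly more laborious comparison of extensions, and then locates $Y$ by exactly the local computation you sketch. The one point to fix when writing it out is the choice of chart: in the chart with coordinates $(u,\,v/u)$ that you name, $1\mapsto du$ is a sub-bundle inclusion (the degeneracy point $Y_{i,j}$ sits at infinity on $\ee_{i,j}$ there), so the computation pinning down $Y_{i,j}$ must be done in the other chart $(u/v,\,v)$, where $du=(u/v)\,dv+v\,d(u/v)$ degenerates precisely at the intersection of $\ee_{i,j}$ with the strict transform of the ruling --- this is the chart the paper uses, with the roles of base and fiber coordinates interchanged relative to yours.
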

\begin{proof} To simplify notation, we set $S=R_i, S'=R'_i, \pi=\pi_i$, $F=F_i$, $L=L_i$, for $i=1,2$. 
By the local nature of the claim, we may and will assume that $\pi : S \to S'$ is the blow--up at only one point $w \in E'$, which, by the generality assumption, is not  any of the four ramification points of the pencil $|L|$. Denote by $\ee$ the $\pi$--exceptional divisor. 

The cokernel of the injective map $\pi^*(\O_{S'}(-2F) )\to \pi^*(\Omega_{S'}) \to \Omega_{S}$ obtained from \eqref{eq:A1.1} and \eqref{eq:Aexbl} is torsion free with second Chern class $c_2=1$. For determinantal reasons, it must be equal to $\O_{S}(-E+2\pi^*(F)) \* \I_{Y}$ for some $Y \in \ee$. 
We will prove, with a local computation, that $Y$ is the intersection of $\ee$ with the strict transform on $S$ of the ruling of $S'$ passing through $w$. This will prove \eqref {eq:A2.10}. 

Choose a chart $U\subset S'$ centered at $w$ with coordinates $(x,z)$, such that the structural map $S\to 
\mathbb P^ 1$ is given on $U$ by $(x,z)\mapsto x$ and $E'$ has equation $z=0$ (we can do this because
 $w$ is not a ramification point of $|L|$).
Then ${\Omega_{S'}}_{|_U}$ is generated by $dx, dz$, the sheaf $\pi^*(\O_{S'}(-2F) )_{|U}$ is generated by $dx$ and $dz$ is the local generator for the quotient line bundle.

Consider $\widetilde{U} \subset U \times \PP^1$ the blow-up of $U$ at $w = (0,0)$. If $[\epsilon,\eta]$ are homogeneous coordinates on $\PP^1$, an equation for $\widetilde{U}$ in $U \times \PP^1$ is $x \eta = \epsilon z$. The open subset $\widetilde{U}_0$ where $\eta \neq 0$, sits in $U \times \mathbb{A}^1$ and has equation $x = tz$ there, where  $t = \frac{\epsilon}{\eta}$ is the coordinate on $\mathbb A^ 1$. So we have coordinates $(t,z)$ on $\widetilde{U}_0$ and the 
equation for $\ee_0 = \ee \cap  (U \times \mathbb{A}^1)$ is $z=0$, whereas $t=0$ is the equation of the strict transform of the ruling on $S'$ though $w$. 

In $\widetilde{U}_0$ we have $dx=t dz + z dt$ so the injection in \eqref{eq:A1.1} gives 
$$ 
\xymatrix{
0 \ar[r] &  \O_{\widetilde{U}_0}  \ar[r]^{\tiny{\left(\begin{array}{r}z\\ t \end{array}\right)}} & \O_{\widetilde{U}_0}^{\oplus 2} \ar[r]^{(-t \;\;\; z)} \ar[r] &
\O_{\widetilde{U}_0},
}
$$where:\\
\begin{inparaenum} [$\bullet$]
\item  $ \O_{\widetilde{U}_0}^{\oplus 2} \simeq \Omega_{\widetilde{U}_0}$ is generated by $dt, \, dz$;\\
\item  the inclusion $\O_{S} (- 2 \pi^*(F)) \hookrightarrow \Omega_{S}$   is locally given in $\widetilde{U}_0$ 
by $ 1 \mapsto z\,dt + t\,dz$, and\\
\item the image of the map $(-t \;\; z)$ is the ideal generated by $t$ and $z$ in $\O_{\widetilde{U}_0}$, 
\end{inparaenum} \newline hence $Y$ has equation $t=z=0$, as wanted. 
\end{proof}

For $i=1,2$, we denote by $X_i \in |2L_i|$ the ramification divisor of $|L_i|$ on $E'$, or, by abuse of notation, on $E$.

\begin{lemma}\label{lem:ramif} Same assumptions as in Lemma \ref{lemma:Aaiutino}. Then the composed map$$\O_{R_i}(-2\pi_i^*(F_i)) \hookrightarrow \Omega_{R_i} \stackrel{r_i}{\longrightarrow} \omega_E,$$given by \eqref{eq:es2}  and \eqref{eq:A2.10}, is non--zero, for $i=1,\,2$. Its image is the line bundle $\omega_E(-X_i)$. \end{lemma}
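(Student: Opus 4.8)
The plan is to reduce the statement to a fact about the degree-$2$ morphism $\phi := \mathfrak{p}_i|_{E'}\colon E' \to \PP^1$ associated to the base-point-free pencil $|L_i|$, and then to conclude by a degree count on the elliptic curve $E \cong E'$. Throughout write $\theta\colon \O_{R_i}(-2\pi_i^*(F_i)) \to \omega_E$ for the composite in the statement and set $U := R_i \setminus \bigcup_{j} \ee_{i,j}$, so that $\pi_i$ restricts to an isomorphism $U \xrightarrow{\ \sim\ } R'_i \setminus \{x_{i,1},\dots,x_{i,k_i}\}$ carrying $E \cap U$ isomorphically onto $E' \setminus \{x_{i,1},\dots,x_{i,k_i}\}$.

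First I would identify $\theta$ over $U$. Since the inclusion $\O_{R_i}(-2\pi_i^*(F_i)) \hookrightarrow \Omega_{R_i}$ is, as in the proof of Lemma \ref{lemma:Aaiutino}, the $\pi_i$-pullback of the map $\mathfrak{p}_i^*(\omega_{\PP^1}) \cong \O_{R'_i}(-2F_i) \hookrightarrow \Omega_{R'_i}$ from \eqref{eq:A1.1} (the codifferential of $\mathfrak{p}_i$), and since over $U$ the trace map $r_i$ of \eqref{eq:es2} is just ``restrict to $E$ and keep the $\omega_E$-component'', functoriality of K\"ahler differentials for the triangle $E' \hookrightarrow R'_i \xrightarrow{\mathfrak{p}_i} \PP^1$ identifies $\theta|_{E\cap U}$ with the codifferential $\phi^*(\omega_{\PP^1}) \to \omega_{E'}$ of $\phi$. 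This codifferential is nonzero --- it is an isomorphism away from the ramification locus --- hence $\theta \neq 0$, and over $E \cap U$ its zero divisor is $\operatorname{Ram}(\phi)$. As $\omega_{E'} \cong \O_{E'}$ and $\phi^*(\omega_{\PP^1}) \cong L_i^{\otimes(-2)}$, Riemann--Hurwitz gives $\O_{E'}(\operatorname{Ram}(\phi)) \cong L_i^{\otimes 2}$, so $\operatorname{Ram}(\phi) = X_i \in |2L_i|$; moreover, for $W$ general the points $x_{i,j}$ are not ramification points of $|L_i|$ (as recalled just before Lemma \ref{lemma:Aaiutino}), so $X_i$ is disjoint from $E \setminus U$.

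Then I would finish by degrees. Since $E \cong E'$ is elliptic, $\deg \omega_E = 0$, whereas by the projection formula and $E' \sim -K_{R'_i} \sim 2\sigma_i + (3-\varepsilon)F_i$ on $R'_i \cong \mathbb{F}_{1-\varepsilon}$ (using $F_i^2 = 0$ and $F_i\cdot\sigma_i = 1$) one has $\deg\big(\O_{R_i}(-2\pi_i^*(F_i))|_E\big) = -2(F_i\cdot E') = -4$. Because $\theta \neq 0$ and $E$ is a smooth integral curve, its image is $\omega_E(-D)$ for a unique effective divisor $D$, and the induced surjection $\O_{R_i}(-2\pi_i^*(F_i))|_E \twoheadrightarrow \omega_E(-D)$ of line bundles on $E$ is an isomorphism (a surjection of line bundles on a curve has torsion-free kernel of rank $0$), so $\deg D = 0-(-4) = 4$. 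On $E \cap U$ we have $D = X_i$ by the previous paragraph, and $X_i$ is disjoint from $E \setminus U$, so $D \geq X_i$; comparing degrees ($\deg D = 4 = \deg X_i$) forces $D = X_i$, i.e.\ $\operatorname{im}(\theta) = \omega_E(-X_i)$, which is the assertion.

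The one point I would be most careful about is the identification of $\theta|_{E\cap U}$ with the codifferential of $\phi$, i.e.\ the compatibility of the trace map $r_i$ of \eqref{eq:es2} with the inclusion $\O_{R_i}(-2\pi_i^*(F_i)) \hookrightarrow \Omega_{R_i}$ of \eqref{eq:A2.10}. If one prefers to avoid functoriality, the same conclusion can be reached by a direct local computation in the two affine charts of the blow-up, in the style of the proof of Lemma \ref{lemma:Aaiutino}, working in the chart in which the strict transform $E$ of $E'$ is visible (it meets $\ee_{i,j}$ at $Y'_{i,j}$, not at $Y_{i,j}$): there $\O_{R_i}(-2\pi_i^*(F_i))$ is generated by a $1$-form restricting to a generator of $\omega_E$ at $Y'_{i,j}$, so $\theta$ acquires no zero over $x_{i,j}$ --- but the degree argument above makes even this local check superfluous.
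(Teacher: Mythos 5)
Your proof is correct, and it packages the argument differently from the paper. Both arguments ultimately rest on the same identification: over the locus where $\pi_i$ is an isomorphism, the composite $\O_{R_i}(-2\pi_i^*(F_i))\to\omega_E$ is the codifferential of the degree-two map $E'\to\PP^1$ given by $|L_i|$ (the paper's local computation $1\mapsto dx=2z\,dz$ with $E'=\{x=z^2\}$ is exactly this codifferential in coordinates at a ramification point). Where you diverge is in how the identification is established and how the argument is closed. The paper verifies everything by explicit local computations in charts: it checks order-one vanishing at each point of $X_i$ and then asserts (leaving it to the reader) that the map is surjective at all other points of $E$, including the points $Y'_{i,j}$ where $E$ meets the exceptional curves, which is the only place where $R_i$ and $R'_i$ genuinely differ. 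You instead invoke functoriality of K\"ahler differentials to get the identification over $E\cap U$ at once, use Riemann--Hurwitz to see that the zero divisor there is $X_i\in|2L_i|$, and then dispose of the exceptional locus by the degree count $\deg D=0-(-4)=4=\deg X_i$, so that $D\geqslant X_i$ forces $D=X_i$ with no extra vanishing at the $Y'_{i,j}$. This buys you a cleaner, computation-free treatment of precisely the step the paper leaves to the reader (and your use of the generality of $W$, ensuring $X_i$ avoids the points $x_{i,j}$, is exactly where the lemma's hypothesis enters, as in the paper). The paper's version, on the other hand, is self-contained at the level of local generators and is stylistically consistent with the proof of Lemma \ref{lemma:Aaiutino}, whose chart computations it reuses. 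Your fallback remark --- that one could instead do the local check in the blow-up chart containing $Y'_{i,j}$ --- is essentially the reader's exercise in the paper's proof, so you have correctly located the only delicate point.
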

\begin{proof}  We use the same simplified notation as in the proof of Lemma \ref{lemma:Aaiutino}. Accordingly we will write $X$ for $X_i$. 

Take a point $P$ in $X$ on $E$. By the generality assumption on $W$, $R$ and $R'$ are isomorphic around $P$. So, if we work locally, we may do it on $R'$. Choose a chart $U$ on $R'$ centered at $P$, with coordinates $(x,z)$, such that the structural map $S\to 
\mathbb P^ 1$ is given on $U$ by $(x,z)\mapsto x$ and $E'$ has equation $x=z^ 2$. So we may take $z$ as the coordinate on $E'$. 

On $U$ the sheaf injection in \eqref{eq:A2.10} is the same as the one in \eqref{eq:A1.1} which, by the proof of Lemma \ref{lemma:Aaiutino}, is given by $1 \mapsto dx$. Composing this injection with the trace map on $E'$ gives 
$1 \mapsto dx = 2 z dz$, which shows that the map is non--zero and its image is a differential form on $E$ vanishing at $P$.  

The proof is accomplished by making similar local computation at points $P\in E$ which are not on $X$. This can be left to the reader.  
\end{proof}


By Lemmas \ref {lemma:Aaiutino} and \ref {lem:ramif}, we have the following commutative diagram

\begin{equation} \label{eq:vaffa}\footnotesize
\xymatrix{ 
& 0  \ar[d] & 0 \ar[d] & 0 \ar[d] & \\
0 \ar[r] & \O_{R_i}(mH-2\pi_i^*(F_i)-E) \* \I_{Z_i}\ar[r] \ar[d] & \Omega_{R_i}(\log E)(mH-E)\* \I_{Z_i}  \ar[r] \ar[d] &\O_{R_i}(mH-E+2\pi_i^*(F_i))\*  \I_{X_i \cup Y_i \cup Z_i} \ar[r] \ar[d] & 0 \\
0 \ar[r] &\O_{R_i}(mH-2\pi_i^*(F_i)) \* \I_{Z_i}\ar[r] \ar[d] & \Omega_{R_i}(mH)\* \I_{Z_i} \ar[r] \ar[d]^{q_i} & \O_{R_i}(mH-E+2\pi_i^*(F_i)) \* \I_{Y_i \cup Z_i}\ar[r] \ar[d] & 0 \\
0 \ar[r] & \omega_E(mH)(-X_i) \ar[r] \ar[d] & \omega_E(mH)  \ar[r] \ar[d] & \O_{X_i} \ar[r] \ar[d] & 0 \\
& 0 & 0 & 0 &
}
\end{equation}where the second vertical 
and horizontal exact sequences are respectively \eqref{eq:es2}, \eqref{eq:A2.10} tensored by $\O_{R_i}(mH) \otimes \I_{Z_i}$, and where we used the isomorphism $\O_E\simeq\omega_E$.

The composed injection 
$\xymatrix{ \O_{R_i}(mH-2\pi_i^*(F_i)) \* \I_{Z_i} \ar[r] & \Omega_{R_i}(mH) \* \I_{Z_i} \ar[r] & \Omega_{R_i}(\log E)(mH) \* \I_{Z_i}}$ obtained from the above diagram and \eqref{eq:es1} tensored by $\O_{R_i}(mH) \* \I_{Z_i}$ fits in the diagram
\begin{equation} \label{eq:nculo}\footnotesize
\xymatrix{ 
&  & 0 \ar[d] & 0 \ar[d] & \\
0 \ar[r] & \O_{R_i}(mH-2\pi_i^*(F_i))\* \I_{Z_i} \ar[r] \ar@{=}[d] & 
\Omega_{R_i}(mH) \* \I_{Z_i} \ar[r] \ar[d] &  \O_{R_i}(mH-E+2\pi_i^*(F_i)) \* \I_{Y_i \cup Z_i}  \ar[r] \ar[d] & 0 \\
0 \ar[r] & \O_{R_i}(mH-2\pi_i^*(F_i))\* \I_{Z_i} \ar[r]  & \Omega_{R_i}(\log E)(mH) \* \I_{Z_i} \ar[r] \ar[d]^{\FF_i} &  \O_{R_i}(mH+2\pi_i^*(F_i)) \* \I_{X_i \cup Y_i \cup Z_i}  \ar[r] \ar[d] & 0 \\
 &  & \O_E(mH)  \ar@{=}[r] \ar[d] &  \O_E(mH)  \ar[d] & \\
&  & 0 & 0 &
}
\end{equation}

 We want to  describe $\im (H^0(\FF_i))$ and $\im (H^0(q_i))$ in the case $m=1$. To this end we first define the following subspaces of $H^0(\O_E(H)) \cong H^0(\omega_E(H))$. Recalling convention and notation as at the end of the introduction, 
from the right-most vertical sequence in \eqref{eq:nculo}, we set
\begin{equation}
  \label{eq:defV}
V_i:=H^0(\O_{R_i}(H+2\pi_i^*(F_i)) \* \I_{X_i \cup Y_i \cup Z_i})_{|E} \subseteq H^0(\O_{E}(H)),  \; i=1,\,2.
\end{equation}
The inclusion works as follows: take a (non--zero) section $s\in V_i$ (which vanishes along a curve $C$ containing $X_i$), restrict it to $E$, then divide by fixed local equations of the points in  $X_i$ (i.e., remove $X_i$ from the divisor cut out by $C$ on $E$). 

Similarly, from the left-most vertical sequence in \eqref{eq:vaffa}, we define
\begin{equation}
  \label{eq:defU}
U_i:= H^0(\O_{R_i}(H-2\pi_i^*(F_i)) \* \I_{Z_i})_{|E} \subseteq H^0(\omega_E(H) (-X_i)) \subset H^0(\omega_{E}(H)), \,\, \text {for} \,\,  i =1,2. 
\end{equation}
At divisor level, the inclusion  is given by taking a divisor in 
$|\O_{R_i}(H-2\pi_i^*F_i) \* \I_{Z_i}|$, restricting it to $E$, and then adding the points $X_i$.

\begin{lemma} \label{lemma:VU}
  If $m=1$, then
\begin{equation}
  \label{eq:cU}
 \im (H^0(q_i) )=U_i \cong H^0(\O_{R_i}(H-2\pi_i^*(F_i)) \* \I_{Z_i}) 
\end{equation}
and
\begin{eqnarray}
  \label{eq:cV}
\im (H^0(\FF_i)) \sub V_i \cong H^0(\O_{R_i}(H+2\pi_i^*(F_i)) \* \I_{X_i \cup Y_i \cup Z_i}), \; \\ \nonumber \mbox{with equality if} \; h^1(\O_{R_i}(H - 2\pi_i^*(F_i)) \*  \I_{Z_i})=0. 
\end{eqnarray}
\end{lemma}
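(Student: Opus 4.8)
The whole statement is a cohomological bookkeeping exercise built on the two diagrams \eqref{eq:vaffa} and \eqref{eq:nculo}, so the plan is to extract the two displayed identifications directly from long exact sequences in cohomology. First I would handle $\im(H^0(q_i))$ and the isomorphism in \eqref{eq:cU}. Look at the left-most vertical sequence in \eqref{eq:vaffa} tensored appropriately; for $m=1$ one has to check the vanishing $H^0(\O_{R_i}(H-2\pi_i^*(F_i)-E)\otimes\I_{Z_i})=0$. Since $E\sim -K_{R_i}$ is anticanonical (cf.\ \eqref{eq:A}), $H-2\pi_i^*(F_i)-E = H-2\pi_i^*(F_i)+K_{R_i}$, and a short intersection-number computation on $R_i\cong\mathbb F_{1-\varepsilon}$ (blown up at $k_i$ points, with $H\sim \pi_i^*(\sigma_i+nF_i)-\ee_i$) shows this class has no sections. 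That vanishing makes the map $H^0(\O_{R_i}(H-2\pi_i^*(F_i))\otimes\I_{Z_i})\to H^0(\omega_E(H)(-X_i))$ injective, identifying the source with its image $U_i$; composing with the trace description of $q_i$ given by the middle column of \eqref{eq:vaffa} shows that this image is exactly $\im(H^0(q_i))$, because in that column $q_i$ is, on global sections, the restriction-to-$E$ map precomposed with the inclusion of $\O_{R_i}(H-2\pi_i^*(F_i))\otimes\I_{Z_i}$ (by Lemma~\ref{lem:ramif} the composite hits $\omega_E(H)(-X_i)$, and the definition of $U_i$ is precisely ``restrict, then add $X_i$'').

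Next I would treat \eqref{eq:cV}. Use the bottom-right vertical sequence in \eqref{eq:nculo}: $\FF_i$ is, on global sections, the composite $H^0(\Omega_{R_i}(\log E)(H)\otimes\I_{Z_i})\to H^0(\O_{R_i}(H+2\pi_i^*(F_i))\otimes\I_{X_i\cup Y_i\cup Z_i})\to H^0(\O_E(H))$, where the first map is surjective onto $V_i$ only after controlling an $H^1$, and the second is the restriction-to-$E$ map whose image is $V_i$ by definition \eqref{eq:defV}. Chasing the left column of \eqref{eq:nculo} (which has $\O_{R_i}(H-2\pi_i^*(F_i))\otimes\I_{Z_i}$ as kernel term), surjectivity of $H^0(\Omega_{R_i}(\log E)(H)\otimes\I_{Z_i})\to H^0(\O_{R_i}(H+2\pi_i^*(F_i))\otimes\I_{X_i\cup Y_i\cup Z_i})$ is governed by $H^1(\O_{R_i}(H-2\pi_i^*(F_i))\otimes\I_{Z_i})$; when that group vanishes one gets $\im(H^0(\FF_i))=V_i$, and in general one only gets the inclusion $\im(H^0(\FF_i))\subseteq V_i$. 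The isomorphism $V_i\cong H^0(\O_{R_i}(H+2\pi_i^*(F_i))\otimes\I_{X_i\cup Y_i\cup Z_i})$ again requires injectivity of the restriction map, i.e.\ $H^0(\O_{R_i}(H+2\pi_i^*(F_i)-E)\otimes\I_{X_i\cup Y_i\cup Z_i})=0$; since $H+2\pi_i^*(F_i)-E=H+2\pi_i^*(F_i)+K_{R_i}$ this is another elementary computation on the blown-up Hirzebruch surface, and one has to make sure $Z_i$ and $X_i\cup Y_i$ are not too special (they are general by the generality of $W$ and of the semistable degeneration).

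\textbf{Main obstacle.} The routine part is the diagram chase; the part that needs genuine care is the two ``auxiliary'' vanishings $H^0(\O_{R_i}(H-2\pi_i^*(F_i)+K_{R_i})\otimes\I_{Z_i})=0$ and $H^0(\O_{R_i}(H+2\pi_i^*(F_i)+K_{R_i})\otimes\I_{X_i\cup Y_i\cup Z_i})=0$, which underpin the injectivity needed for both isomorphisms, together with the identification of $\O_E$ with $\omega_E$ being compatible with the trace and residue maps so that the subspaces $U_i$, $V_i$ really land where \eqref{eq:defU}, \eqref{eq:defV} say. I would expect the first of these to be straightforward (the line bundle $H-2\pi_i^*(F_i)+K_{R_i}$ on $R_i$ pulls back from a negative class on $\mathbb F_{1-\varepsilon}$ once one accounts for the exceptional curves), while the second, involving the fat scheme $X_i\cup Y_i\cup Z_i$ of degree $2k_i+k_i+\delta_i$, is the delicate point: one must verify that imposing these conditions still forces the relevant $H^0$ to vanish, which is where the generality hypotheses on the degeneration are really used. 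These vanishings may already be available from the computations in \S\ref{sec:lemma}; if so, I would simply cite them, and otherwise isolate them as a short sub-lemma proved by the same Hirzebruch-surface bookkeeping used in Lemmas~\ref{lemma:Aaiutino} and \ref{lem:ramif}.
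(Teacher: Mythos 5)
Your proposal is correct and follows essentially the same route as the paper: both \eqref{eq:cU} and \eqref{eq:cV} are read off from the rows and columns of \eqref{eq:vaffa} and \eqref{eq:nculo} with $m=1$ using the vanishing $h^0(\O_{R_i}(H-E+a\pi_i^*(F_i)))=0$ (here needed for $a=\pm 2$), which is precisely what the paper's one-line proof cites. The only point worth noting is that the vanishing you single out as delicate, $H^0(\O_{R_i}(H+2\pi_i^*(F_i)+K_{R_i})\otimes\I_{X_i\cup Y_i\cup Z_i})=0$, requires no generality whatsoever: since $K_{R_i}\sim -E$, the underlying line bundle pushes down to $-\sigma_i+(n-1+\varepsilon)F_i$ on the scroll, which already has no sections, and imposing the scheme $X_i\cup Y_i\cup Z_i$ can only decrease $h^0$, so the subtlety you anticipate does not arise.
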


\begin{proof} This follows from \eqref{eq:vaffa} and \eqref{eq:nculo} with $m=1$, and $h^0(\O_{R_i}(H-E+a\pi_i^*(F_i)))=0$ for any integer $a$.
\end{proof}

\subsection{A new proof of Theorem \ref{thm:mappamod}}

We consider $V_{\delta}(R)$ the locally closed subscheme of the linear system $|H|$ on $R$ parametrizing the universal
family of curves $C \in |H |$ having only nodes as singularities, exactly $\delta$ of them (called the {\it marked nodes}) off the
singular locus $E$ of $R$, and such that the partial normalization of $C$ at 
the $\delta$ marked nodes is connected, i.e., the marked nodes
are \emph{non-disconnecting} nodes (cf. \cite[\S 1.1]{ck}). Under a semistable deformation $\alpha:\Rc\to\Delta$ of $R$ as in Definition \ref{deformation}, it is  possible to deform such a curve $C$ to a $\delta$-nodal curve on the fibres $R_t$ of 
$\alpha:\Rc\to\Delta$, for $t\neq 0$, 
preserving its marked nodes  and  smoothing the remaining $g+1$ nodes of $C$ located at its intersection with $E$ (see \cite[Lemma\,1.4]{ck}).

Usually we will assume the deformation $\alpha:\Rc\to\Delta$ to be general.
Then, given $C \in V_{\delta}(R)$, we may find a pair $(C',S')$ general in some component $\V \sub \V_{1,\delta}$, where $(S',H')$ is general  in  $\K_p$ and $C' \in V_{1,\delta}(S',H')$, such that $C$  is a flat limit of $C'$. Let $Z$ [resp. $Z'$] be the scheme of the $\delta$ marked nodes of $C$ [resp. of $C'$]. Then  (possibly after shrinking $\Delta$ further) the triple $(R,H,Z)$ is a semistable degeneration of $(S',H',Z')$ and  we may  apply Proposition \ref{thm:main} 
to show the desired vanishings in \eqref{eq:modcond2} needed  to prove Theorem  \ref{thm:mappamod}.
For this we need the 
following result, whose proof we postpone until the next section. 

\begin{prop} \label{lemma:belleintersezioni} 
There exist $W \in |T^1_{R'}|$ and $C \in V_{\delta}(R)$, with $Z$ its scheme of $\delta$ marked nodes, such that:\\
\begin{inparaenum}
\item[(i)] if $g :=p-\delta   \geqslant 15$, then the maps $\sum_{i=1}^ 2 H^0(q_i)$ and $\sum_{i=1}^ 2 H^0(\FF_i)$ are surjective and 
$$h^1(\O_{R_i}(H+2\pi^*_i(F_i)) \* \I_{X_i \cup Y_i \cup Z_i})=h^1(\O_{R_i}(H-2\pi^*_i(F_i)) \* \I_{Z_i})=0, \quad \text{for}\quad i=1,2;$$
\item[(ii)] if $g :=p-\delta  \leqslant7$, then 
$$\im (H^0(\FF_1)) \cap \im (H^0(\FF_2))=\im (H^0(q_1)) \cap \im (H^0(q_2))=\{0\}.$$
\end{inparaenum}
\end{prop}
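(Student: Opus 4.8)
The plan is to produce an explicit pair $(W,C)$ — equivalently, to choose the $16$ points $W=W_1+W_2\in|T^1_{R'}|$ and a nodal curve $C\in V_\delta(R)$ — and then to reduce all the asserted (non)vanishings and (non)surjectivities to cohomology of line bundles on the scrolls $R'_1,R'_2\cong\mathbb F_{1-\varepsilon}$, via Lemma \ref{lemma:VU} and the diagrams \eqref{eq:vaffa}, \eqref{eq:nculo}. Since $C\in|H|$ on $R$ restricts to curves $C_i:=C_{|R_i}\in|H_{|R_i}|$ with $C_1,C_2$ meeting $E$ in the same $g+1$ points (the unmarked nodes of $C$ lying on $E$), and since $\delta$ marked nodes split as $\delta_1+\delta_2$ with $Z_i\subset R_i-E$, the natural strategy is to build $C_1$ and $C_2$ separately with prescribed nodes and prescribed intersection divisor with $E$, then glue. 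The curves $X_i\in|2L_i|$ and $Y_i$ are determined once $W$ is fixed, so the real freedom is in the choice of $W$ (hence of $X_i,Y_i$) and of the $\delta_i$ points of $Z_i$.

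First I would translate everything onto $\mathbb F_{1-\varepsilon}$. Using \eqref{eq:A} we have $H_{|R_i}=\sigma_i+nF_i$ and $2\pi_i^*(F_i)=2F_i$ (pulled back), so the bundles appearing are $\O_{R_i}(\sigma_i+(n-2)F_i)$ and $\O_{R_i}(\sigma_i+(n+2)F_i)$ twisted down by ideal sheaves of the fat points $Z_i$, $X_i\cup Y_i\cup Z_i$. For part (i) I would show, by a dimension count, that for $g\geq 15$ one can impose the $\delta_i=\binom{\cdot}{\cdot}$-many node conditions (plus the $X_i\cup Y_i$ conditions) on these linear systems while keeping $h^1=0$: on a Hirzebruch surface a general collection of points imposes independent conditions on a sufficiently positive line bundle, and I would check the numerics guarantee enough positivity. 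The two surjectivity statements $\sum H^0(q_i)$ and $\sum H^0(\FF_i)$ surjective onto $H^0(\omega_E(H))$ resp. $H^0(\O_E(H))$ then follow from Lemma \ref{lemma:VU} together with the observation that $U_1,U_2$ (resp. $V_1,V_2$) together span, which in turn reduces to a restriction-to-$E$ surjectivity on each scroll (this is where the $h^1$-vanishings get used, to make $\im H^0(\FF_i)=V_i$). For part (ii) I would instead exhibit a $W$ and a $C$ making $U_1\cap U_2=\{0\}$ and $V_1\cap V_2=\{0\}$ inside $H^0(\omega_E(H))$: since $\dim U_i$ and $\dim V_i$ can be computed (again by line-bundle cohomology on the scrolls, controlled by $g\leq 7$), a transversality/general-position argument in the $(g+1)$-dimensional space $H^0(\O_E(H))$ — using that $L_1,L_2,\O_{E'}(1)$ are in general position in $\Pic(E')$ so the divisors $X_1,X_2$ and the imposed points are ``independent'' on the elliptic curve $E$ — should force the intersections to be trivial once $\dim U_1+\dim U_2\leq g+1$ and similarly for the $V_i$.

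The main obstacle I expect is not any single cohomology vanishing but the simultaneous/global nature of the construction: one must pick a single $W\in|T^1_{R'}|$ and a single $C\in V_\delta(R)$ that works for both $i=1$ and $i=2$ at once, and in case (i) one needs the node-scheme $Z$ to simultaneously (a) be smoothable to a $\delta$-nodal curve on the general $K3$ (so $(R,H,Z)$ really is a semistable degeneration of some $(S',H',Z')$ with $C'\in V_{1,\delta}$, via \cite[Lemma 1.4]{ck}), (b) keep all four $h^1$'s zero, and (c) not interfere with the $X_i\cup Y_i$ conditions or with the gluing along $E$. Concretely, the curves $C_1,C_2$ must agree on $E\cap C=$ the $g+1$ unmarked nodes, and the positions of $X_i,Y_i$ on $E$ are forced by $W$, so there is a genuine compatibility constraint between the choice of $W$ and the existence of the nodal $C_i$'s with the right intersection divisor — handling this, likely by first choosing the $g+1$ points on $E'$, then the $\delta_i$ interior nodes, then verifying that a $W$ compatible with all choices exists and is general enough, is the delicate part. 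In the write-up I would isolate this into the explicit construction at the start of the next section (the proof of Proposition \ref{lemma:belleintersezioni}) and reduce the rest to the line-bundle computations on $\mathbb F_{1-\varepsilon}$ sketched above.
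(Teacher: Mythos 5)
Your reduction of the statement to linear algebra inside $H^0(\O_{E'}(H'))$ via Lemma \ref{lemma:VU} and the diagrams \eqref{eq:vaffa}, \eqref{eq:nculo} is the same first step the paper takes, and you correctly locate the difficulty in the simultaneous choice of $W$ and $C$. But the proposal has a genuine gap precisely there: the arguments you offer in place of that construction would not go through. For part (i), ``a general collection of points imposes independent conditions on a sufficiently positive line bundle'' is not applicable, because none of the relevant configurations is general: $W=W_1+W_2$ is constrained to the fixed linear system $|T^1_{R'}|$ of degree $16$ on $E'$ (so only $15$ of its points are free), $X_i$ is the ramification divisor of $|L_i|$ and hence completely determined, and the marked nodes $Z$ of a curve in $W_\delta(R)$ are cut out by a \emph{connected} line chain, so their positions are rigidly tied to a single starting point via \eqref{eq:relgen-odd}--\eqref{eq:relgen-even}. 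Likewise, the surjectivity of $\sum_i H^0(\FF_i)$ does not ``reduce to a restriction-to-$E$ surjectivity on each scroll'': neither $V_i$ alone spans $H^0(\O_{E'}(H'))$ (which is $(p+1)$-dimensional, not $(g+1)$-dimensional as you write --- this slip propagates into your inequality $\dim U_1+\dim U_2\leqslant g+1$, which is not the right numerical condition). What actually makes everything work in the paper is a single computation: $V_1\cap V_2\cong H^0(\O_{R'}(1)\*\I_{W\cup Z})$ and $U_1\cap U_2\cong H^0(\O_{R'}(1)\*\I_{X\cup Z})$ (using linear normality of the scrolls, so restrictions to $E'$ come from hyperplanes of $\PP^p$), and then the construction of a pair $(W,C)$ for which these spaces have dimension $\max\{0,g-15\}$ and $\max\{0,g-7\}$ respectively (Lemmas \ref{lem:V1V2trans} and \ref{lem:U1U2trans}). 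Once these intersections are computed, \emph{all} of part (i) --- the individual $h^1$-vanishings, the expected dimensions of $U_i,V_i$, and the surjectivities --- falls out of the Grassmann formula sandwiched between the lower bounds $\dim V_i\geqslant p+1-\delta_i-k_i$, $\dim U_i\geqslant p-3-\delta_i$ and the upper bound $\dim(V_1+V_2)\leqslant p+1$ (Corollaries \ref{cor:XWZic} and \ref{cor:calcoloesp}); there is no separate positivity argument to run.

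The content you defer (``handling this \dots\ is the delicate part'') is therefore not an implementation detail but the entire proof. Concretely, the paper builds $(W,C)$ by choosing a general starting point $P$ of the line chain, taking a general hyperplane through the chain, extracting a subdivisor $D'$ of the residual divisor on $E'$ of the right degree ($15$, $14$, or a piece $\hat X$ of $X$ depending on the case), letting $W$ (resp.\ the embedding $\O_{E'}(1)$) be determined by $D'$, projecting the scrolls internally from $D'$ to reduce to the known dimension count $\dim|\O_{R''}(1)\*\I_Z|=r-\delta$ of \cite[Rem.~1.1]{ck}, and then verifying the key non-incidence \eqref{eq:noncista} ($P'\notin C'$) by exploiting the absence of relations among $P$, $L_1$, $L_2$, $\O_{E'}(1)$ in $\Pic(E')$. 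Your appeal to ``general position of $L_1,L_2,\O_{E'}(1)$'' gestures at the right mechanism but does not substitute for this construction, and without it neither the transversality in (ii) nor the surjectivity in (i) is established.
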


\bigskip
\begin{proof}[Proof of Theorem \ref{thm:mappamod}] We will prove the desired vanishings in \eqref{eq:modcond2} using Proposition \ref{thm:main}. 

 If  $g \geqslant15$, conditions \eqref{eq:buffon1} and \eqref{eq:buffon2} in Proposition \ref{thm:main} are satisfied by  Proposition \ref{lemma:belleintersezioni}(i), whereas condition \eqref{eq:buffon3} is satisfied by the middle horizontal sequence in \eqref{eq:nculo} and the vanishings of $h^1$ in Proposition \ref{lemma:belleintersezioni}(i). 

 If \color{black} $g \leqslant7$,  conditions \eqref{eq:neuer2} and \eqref{eq:neuer4} in Proposition \ref{thm:main} are satisfied by  Proposition \ref{lemma:belleintersezioni}(ii), whereas condition \eqref{eq:neuer3} is satisfied 
 by the upper horizontal sequence in \eqref{eq:vaffa} and   the fact that  $h^0(\O_{R_i}(H-E\pm 2\pi^*_i(F_i)))=0$, for  $i=1,2$. 
\end{proof}




\section{Proof of Proposition \ref{lemma:belleintersezioni}} \label{sec:lemma}

With a slight abuse of terminology, we will call \emph{lines} the curves on $R_i$ in the pencil $|\pi^ *(F_i)|$, for $i=1,2$. 
The following component of $V_{\delta}(R)$ has been introduced in \cite{cfgk}. 

\begin{definition} \label{def:compopartenza} 
For any $0 \leqslant \delta \leqslant p-1$, we define  $W_{\delta}(R)$ to be the set of  curves $C$ in $V_{\delta}(R)$ such that:\\
\begin{inparaenum} 
\item[(i)] $C$ does not contain any of the exceptional curves $\ee_{i,j}$ of the contractions $\pi_i:R_i \to R'_i$, $i=1,2$; \\
\item[(ii)]  $C$ has exactly $\delta_1:=\lfloor \frac{\delta}{2} \rfloor$ nodes on $R_1-E$ and $\delta_2:=\lceil \frac{\delta}{2} \rceil$ nodes on 
$R_2-E$, hence it splits off $\delta_i$ lines on $R_i$, for $i=1,2$;\\
\item[(iii)] the union of these $\delta=\delta_1+\delta_2$ lines is connected.\\
\end{inparaenum} For any curve $C$ in $W_{\delta}(R)$, we denote by $\mathfrak C$ the connected union of $\delta$ lines as in (iii), called the \emph{line chain of length $\delta$} of $C$, and by $\gamma_i$
the irreducible component of the residual curve to $\mathfrak C$
on $R_i$, for $i=1,2$. 
\end{definition}

It has been proved in \cite[Prop. 4.2]{cfgk} that $W_{\delta}(R)$ is a smooth open subset of a component of $V_{\delta}(R)$, with the nodes described in (ii) as the marked nodes of any of its members.
We write $W_{\delta}(R')$ for the set of images in $R'$ of the curves in $W_{\delta}(R)$. Without further notice, we will denote the image of $C \in W_{\delta}(R)$ and its line chain $\mathfrak C$ and components $\gamma_i$ by $C'$, $\mathfrak C'$ and $\gamma'_i$, respectively.

Members of $W_{\delta}(R)$ are shown in Figure \ref{fig:dis} below.
\begin{figure}[ht] 
\[\begin{array}{ll}
\includegraphics[width=5.2cm]{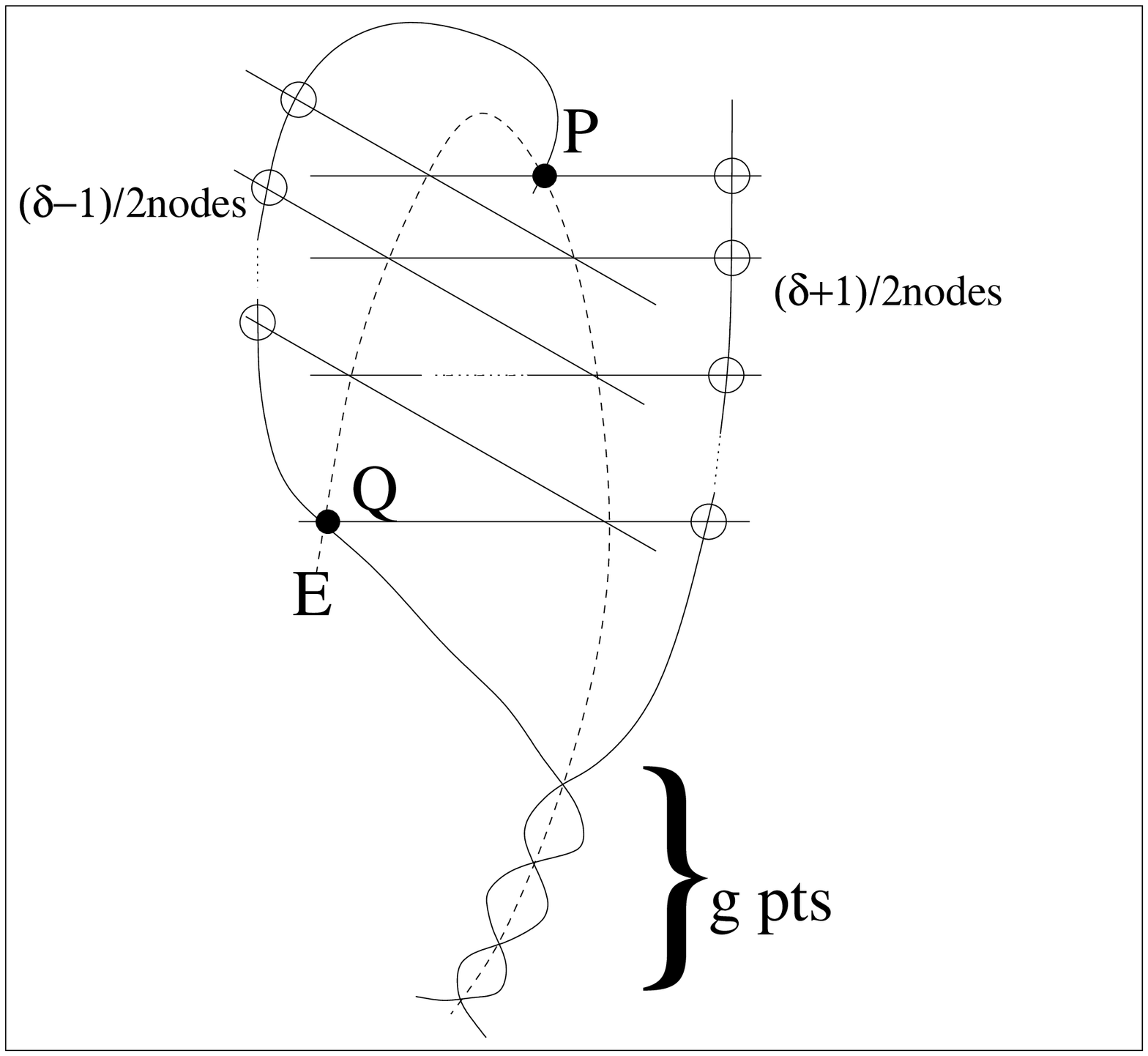} & \includegraphics[width=5cm]{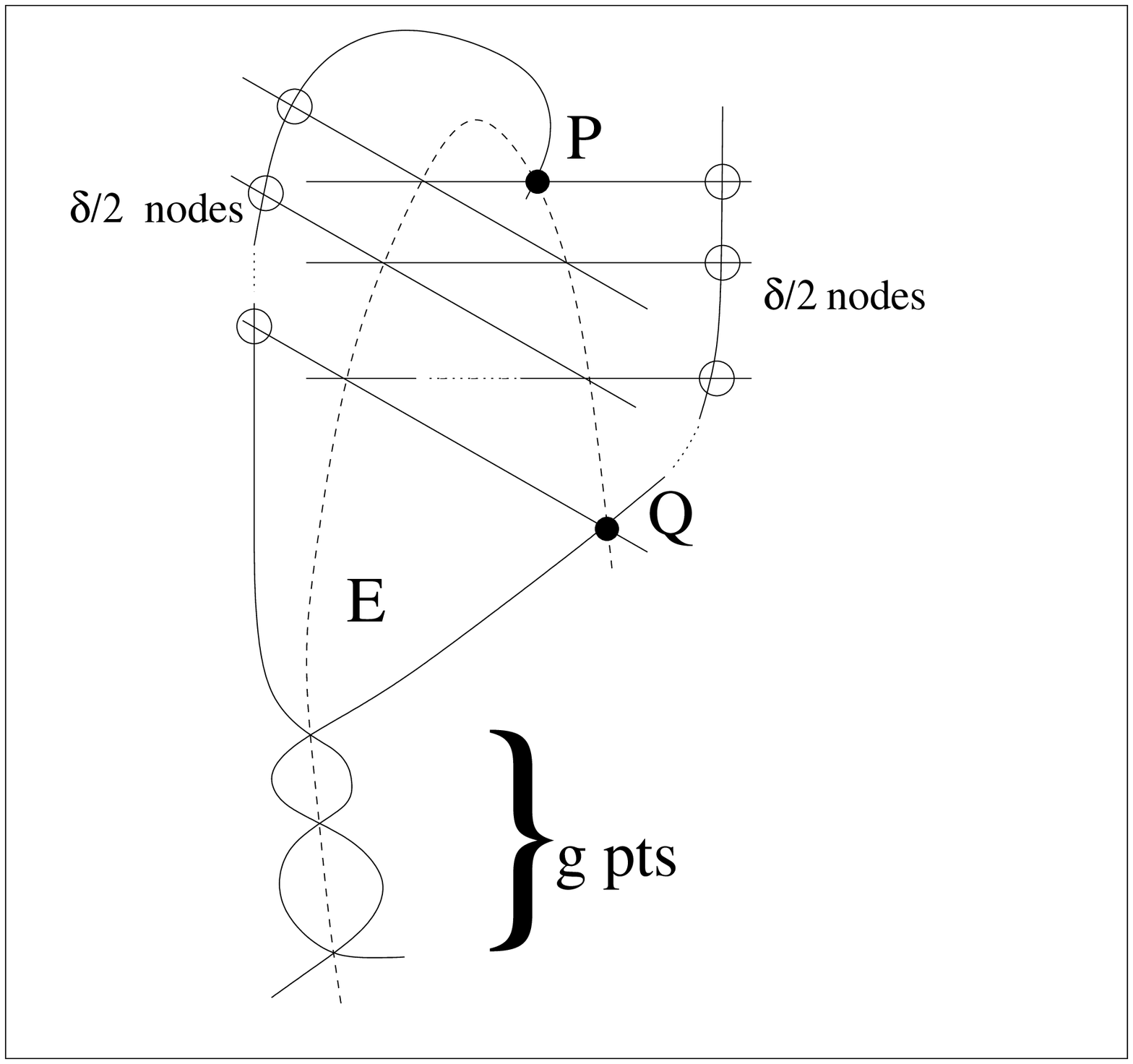} \\
\end{array}
\]
\caption{Members of $W_{\delta}(R)$ when $\delta$ is odd (left) and 
even (right)}
\label{fig:dis}
\end{figure}
The points $P$ and $Q$ in the picture  (the \emph{starting point} and the \emph{end point} of $\mathfrak C$
)
satisfy the following relation on $E$:
\begin{eqnarray}
  \label{eq:relgen-odd}
  P+Q & \sim & \frac{\delta+1}{2}L_2-\frac{\delta-1}{2}L_1, \; \; \mbox{when $\delta$ is odd;} \\
\label{eq:relgen-even}  P-Q &\sim & \frac{\delta}{2}(L_2-L_1), \; \; \mbox{when $\delta$ is even.} 
\end{eqnarray}

We denote by $\mathfrak c$ the intersection of $\mathfrak C$ with $E$, considered as a reduced divisor on $E$. This consists of $\delta+1$ points, i.e., $P+Q$ plus the $\delta-1$ double points of $\mathfrak C$ which are all located on $E$. One has:
\begin{eqnarray}
  \label{eq:relgen-odd_1}
  \mathfrak c & \sim & P+Q+ \frac{\delta-1}{2}L_1 \; \; \mbox{when $\delta$ is odd;} \\
\label{eq:relgen-even_2}  \mathfrak c & \sim & P + \frac{\delta}{2}L_1 \sim  Q + \frac{\delta}{2}L_2 \; \; \mbox{when $\delta$ is even.} 
\end{eqnarray}

Recalling \eqref{eq:defV}, we have:
 
\begin{lemma} \label{lem:V1V2trans}  There exist $W \in |T^1_{R'}|$ and $C \in V_{\delta}(R)$ such that $\dim(V_1 \cap V_2) = \max\{0,g-15\}$.
\end{lemma}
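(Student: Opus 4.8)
The plan is to make everything explicit on the elliptic curve $E\cong E'$ and to reduce the statement to a count of dimensions of certain linear series on $E$. Recall from \eqref{eq:defV} that $V_i\subseteq H^0(\O_E(H))$ is obtained by restricting sections of $\O_{R_i}(H+2\pi_i^*(F_i))\otimes\I_{X_i\cup Y_i\cup Z_i}$ to $E$ and then removing the fixed divisor $X_i$. First I would identify, on $R_i'\cong \FF_{1-\varepsilon}$, the linear system $|H'+2F_i|=|\sigma_i+(n+2)F_i|$ and describe its restriction to $E'$: using \eqref{eq:A} and $K_{R_i'}\sim -E'$, one computes $\O_{R_i'}(\sigma_i+(n+2)F_i)\otimes\O_{E'}\cong \O_{E'}(1)\otimes 2L_i$, a line bundle of degree $p+5$ on $E'$. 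Passing to $R_i$ and imposing vanishing along the scheme $X_i\cup Y_i\cup Z_i$ (of length $4+k_i+\delta_i$, where $k_i$ is the number of blown-up points) translates, after subtracting $X_i$ from the restricted divisor, into the sub-series of $|\O_E(H)|\cong |\O_{E'}(1)|$ cut out by divisors passing through the image points; here $|\O_{E'}(1)|$ has dimension $p$ (projective), i.e.\ $h^0=p+1$.

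The second step is to express $V_1$ and $V_2$ as the two subspaces of the $(p+1)$-dimensional space $H^0(\O_E(H))$ consisting of sections whose associated divisor on $E$ contains a prescribed effective divisor $D_i$ of degree determined by the geometry: namely the residue of $X_i$ together with the images of $Y_i$ and $Z_i$, constrained by the linear-equivalence relations \eqref{eq:relgen-odd}--\eqref{eq:relgen-even_2} governing the line chain $\mathfrak C$. Generically imposing a divisor of degree $d_i$ on an elliptic curve cuts down $h^0$ by $\min\{d_i, h^0\}$ if the points are general, but here the points are \emph{not} general — they satisfy the chain relations — so one must compute $h^0(\O_E(H)(-D_i))$ exactly. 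The expected numerology is that $\dim V_i = g - (\text{something})$ and that $V_1\cap V_2 = H^0$ of a line bundle of degree $g-15$ on $E$ twisted appropriately, which by Riemann--Roch on the elliptic curve $E$ has dimension exactly $\max\{0, g-15\}$ (vanishing of $h^1$ for positive degree, and the degree-$0$ non-effective case otherwise, using the genericity of $W\in|T^1_{R'}|$ which keeps the relevant line bundle non-trivial). So the proof reduces to: (a) computing the degrees and linear equivalence classes of $D_1, D_2$ on $E$ from the explicit description of $W_\delta(R)$ and the relations \eqref{eq:relgen-odd_1}--\eqref{eq:relgen-even_2}; (b) showing $V_1\cap V_2$ is the global sections of a line bundle of degree $g-15$; (c) invoking genericity of $W$ to ensure this line bundle, when it has degree $0$, is non-effective.

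Concretely I would proceed as follows. Fix the parity of $\delta$; by symmetry treat $\delta$ odd first. Using \eqref{eq:relgen-odd} and \eqref{eq:relgen-odd_1}, write the divisor of a section in $V_i$ restricted to $E$ in terms of $P,Q,L_1,L_2,\O_{E'}(1)$ and the points $x_{i,j}$ (equivalently $W_i$, with $W=W_1+W_2\in|T^1_{R'}|$ general of degree $16$, so $\deg W_i = k_i$ and $k_1+k_2=16$). The key identity to extract is that a section lies in both $V_1$ and $V_2$ iff its divisor on $E$ contains $D_1+D_2$ minus the overlaps, and a Chern-class bookkeeping should give $\deg(D_1+D_2) = (p+1) - (g-15) = p+16-g = \delta+16$ — consistent with the $16$ nodes from $T^1_{R'}$ plus the $\delta$ chain constraints. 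Then $V_1\cap V_2 = H^0\big(\O_E(H)(-D_1-D_2)\big)$, a line bundle of degree $g-15$ on the elliptic curve $E$. Finally, choose $W\in|T^1_{R'}|$ and the chain $\mathfrak C$ (hence $P,Q$) so that this line bundle is general in its degree class: for degree $\geq 1$ Riemann--Roch gives $h^0 = g-15$ with no genericity needed, and for degree $\leq 0$ we need it non-trivial (and non-effective), which holds for general $W$ since the constraints \eqref{eq:relgen-odd}--\eqref{eq:relgen-even} leave enough freedom in $P,Q$ and in the $x_{i,j}$ — this is exactly the kind of genericity that the hypothesis "$W\in|T^1_{R'}|$ general" is designed to supply, and it is also where one uses that $L_1\ne L_2$ and that there is no relation among $L_1,L_2,\O_{E'}(1)$ in $\Pic(E')$. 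The main obstacle I expect is step (a): carefully tracking the passage from $R_i'$ to $R_i$ through the blow-ups $\pi_i$, so that the scheme $X_i\cup Y_i\cup Z_i$ and the twist by $2\pi_i^*(F_i)$ translate correctly into a divisor on $E$ — in particular making sure the points $Y_{i,j}$ (which lie on the exceptional curves, not on $E'$) contribute the right conditions after restriction to $E$, and that the "remove $X_i$" operation in the definition of $V_i$ is compatible with the residue sequences \eqref{eq:vaffa}, \eqref{eq:nculo}. Once the divisor-class computation on $E$ is pinned down, the dimension count is immediate from Riemann--Roch on the elliptic curve.
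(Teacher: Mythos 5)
There is a genuine gap at the heart of your argument: the identification $V_1\cap V_2=H^0\bigl(\O_E(H)(-D_1-D_2)\bigr)$ for a divisor $D_1+D_2$ on $E$ is not available, because the marked nodes $Z_i$ lie \emph{off} $E$. The subspace $V_i\subseteq H^0(\O_{E'}(H'))$ is the image under restriction of $H^0(\O_{R'_i}(H'+2F_i)\*\I_{X_i\cup W_i\cup Z_i})$; the conditions coming from $X_i$ and $W_i$ (points of $E'$) do translate into divisorial conditions on $E'$, but a point $z\in Z_i$ in the smooth locus of $R'_i$ imposes a linear condition on $H^0(\O_{E'}(H'))$ that is \emph{not} of the form ``the divisor on $E'$ contains a prescribed point'' (e.g.\ the curves in $|H'+2F_i|$ through $z$ have no common point on $E'$). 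Consequently $V_1\cap V_2$ is not the space of sections of a line bundle on the elliptic curve, Riemann--Roch on $E$ does not apply, and your degree bookkeeping $\deg(D_1+D_2)=\delta+16$, while numerically suggestive, has no sheaf-theoretic content. The paper instead identifies $V_1\cap V_2\cong H^0(\O_{R'}(1)\*\I_{W\cup Z})$, i.e.\ hyperplanes of $\PP^p$ through the $16$ points of $W$ on $E'$ \emph{and} the $\delta$ nodes $Z$ off $E'$, and the whole point of the proof is to control this space: the input $\dim|\O_{R'}(1)\*\I_Z|=g$ comes from the non-disconnecting-nodes result of \cite[Rem.~1.1]{ck}, and one must then show that $W$ imposes $\min\{16,\,g+1\}$ further independent conditions.

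This second issue is also not addressed by your appeal to ``genericity of $W$'': once $R'$ is fixed, $W$ varies only in the fixed linear series $|T^1_{R'}|$ of degree $16$ on $E'$, so its $16$ points are \emph{not} $16$ general points, and moreover $W$ must be chosen compatibly with the curve $C$ and its line chain. This is where the paper does its real work (Subcases 2A--2C): for $g\geqslant 16$ it chooses $15$ of the $16$ points of $W$ inside the hyperplane divisor $D$ cut by $C'$ on $E'$, projects from them, and proves via the relations \eqref{eq:relgen-odd}--\eqref{eq:relgen-even_2} and the independence of $L_1,L_2,\O_{E'}(1)$ that the residual sixteenth point $P'$ does \emph{not} lie on $C'$, so that it imposes one extra condition; the cases $g=15,16$ need separate ad hoc arguments. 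Your proposal contains no mechanism replacing this construction, so even granting a corrected description of $V_1\cap V_2$ the dimension count would not follow. (Your preliminary reductions --- the computation $\deg\bigl((H'+2F_i)|_{E'}\bigr)=p+5$ and the removal of $X_i$ via projection --- do match the first steps of the paper's proof.)
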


\begin{proof} 
One has
$Y_i, Z_i \not \in E$ whereas $X_i \in E$,  cf. Lemma \ref{lem:ramif}.  By abuse of notation,  we will identify
$Z_i$ and $X_i$ with their images via the maps $\pi_i:R_i \to R'_i$. As indicated before Lemma \ref {lemma:Aaiutino}, one has $W_i = \pi_i(Y_i)$, for $i =1,2$. 

By Leray's spectral sequence and  \eqref{eq:defV}, we  have 
$V_i \cong  H^0(\O_{R'_i}(H'+2F_i) \* \I_{X_i \cup W_i \cup Z_i})_{|E'} \sub H^0(\O_{E'}(H'))$, with the inclusion  explained right after \eqref{eq:defV}: at divisors level,  take a divisor   in 
$|\O_{R'_i}(H'+2 F_i) \* \I_{X_i \cup W_i \cup Z_i}|$, restrict it to $E'$ and then 
remove the points $X_i$, $i =1,2$.  \medskip
 
\noindent 
Case 1:  $p \leqslant 6$.  Since $\deg(W)=16$, there is $h \in \{1,2\}$ such that  $k_h=\deg(W_h) \geqslant 8$. Moreover, 
$\dim (V_{h}) \leqslant h^0(\O_{R'_{h}}(H'+2F_{h}) \* \I_{X_{h} \cup W_{h}})_{|E'}$. From 
\[ 
\xymatrix{
0 \ar[r] &  \O_{R'_{h}}(H'+2 F_{h}-E') \ar[r] &  \O_{R'_{h}}(H'+2 F_{h}) \* \I_{X_{h} \cup W_{h}} \ar[r] &  \O_{E'}(H'+2 F_{h})(-X_{h} -W_{h}) \ar[r] &    0
}
\]and 
$\deg (\O_{E'}(H'+2 F_{h})(-X_{h} -W_{h})) =p+1-k_{h} \leqslant-1$, 
we see that $V_{h} =\{0\}$ and the assertion follows. 

\medskip
 
\noindent 
Case 2:  $p\geqslant 7$.  If we project $ R_i' \subset \PP^p$ from $X_i$, which consists of four points on different fibers of $|F_i|$, we obtain a smooth, rational normal scroll $R''_i \subset \PP^{p-4}$ of the same type  as $R_i'$, where the smoothness of $R_i''$ follows from $p \geqslant7$ and the fact that the projection is an internal projection of the original scroll from the linear span of four points lying on four different fibers.

The linear system  $|\O_{R''_i}(1) |$ on $R_i'' \subset \PP^{p-4}$ is given (with obvious notation) by $|\sigma_i''+(n-2)F''_i|$, which corresponds on $R_i'$ to $|\O_{R'_i}(H') \* \I_{X_i}|$.  Likewise, $|\O_{R'_i}(H'+2F_i) \* \I_{X_i \cup W_i \cup Z_i}|$ on $R_i'$ corresponds  to $|\O_{R''_i}(\sigma_i''+nF''_i) \* \I_{W_i \cup Z_i}|$ on $R_i''$ (by abuse of notation we  denote by the same symbols the images of $W_i$ and $Z_i$ under the projection from $X_i$).

Hence $V_i$ corresponds on $R_i''$ to $H^0(\O_{R''_i}(\sigma_i''+nF''_i) \* \I_{W_i \cup Z_i})_{|E''}$, where $E''\cong E'$ is the image of $E'$ via the projection from the points $X_i$.  
This shows that we can directly work on $R_i'$ and make the identification 
\begin{equation}
  \label{eq:identificaz} V_i = H^0(\O_{R'_i}(H') \* \I_{W_i \cup Z_i})_{|E'} \sub H^0(\O_{E'}(H')).\
\end{equation}Since  $R'_i$ is linearly normal, we have 
$H^0(\O_{R'_i}(H') \* \I_{W_i \cup Z_i})_{|E'}=H^0(\O_{\PP^{p}}(1) \* \I_{(W_i \cup Z_i)/\PP^p})_{|E'}$, hence  
\[ V_1 \cap V_2=H^0(\O_{\PP^{p}}(1) \* \I_{(W \cup Z)/\PP^p})_{|E'}\cong  H^0(\O_{R'}(1) \* \I_{W \cup Z})_{|E'} \cong H^0(\O_{R'}(1) \* \I_{W \cup Z}).
\]

Subcase 2A:  $g\leqslant 14$. Since $15$ of the $16$ points in $W$ are general on $E'$, and since 
$\dim (|\O_{R'}(1) \* \I_{Z}|)=g$  for any $C \in W_{\delta}(R')$ with $Z$ its scheme of $\delta$  marked not disconnecting nodes (cf. \cite[Rem.\;1.1]{ck}),  one has $|\O_{R'}(1) \* \I_{W \cup Z}|= \emptyset$, i.e.,  $V_1 \cap V_2=\{0\}$, as desired. 

Subcase 2B: $p=g=15$. The argument is similar to the one in the previous case.  Indeed, the fact that  $\O_{E'}(W) \simeq T^1_{R'} \not \simeq \O_{E'}(H')$ when $p=15$ (cf. \eqref{eq:B}) implies that $|\O_{R'}(H') \* \I_{W}|$ is empty, hence again $V_1 \cap V_2=\{0\}$. 

Subcase 2C: $g\geqslant 15$ and $p\geqslant 16$. Since $\deg(\O_{E'}(H'-W))=p-15\geqslant 1$, we have
$\dim (|\O_{R'}(H') \* \I_{W}|)=p-16$. 

If $\delta \leqslant5$ (whence $3\delta\leqslant p$), then we can choose a curve $C\in V_{\delta}(R)$ whose 
$\delta$ nodes map to general points on one of the two components of $R'$,
  so that $\dim (|\O_{R'}(1) \* \I_{W \cup Z}|) =\max\{-1,p-16-\delta\}$, as desired.   
 
 Hence we may assume $\delta \geqslant6$, which yields $p =g+\delta \geqslant 21$ and we will construct a curve $C\in W_\delta(R)$ verifying the assertion. 

Let $P\in E'$ be a general point, and consider the line chain $\mathfrak{C}'$ starting at $P$. Set $\Sigma:=
|\O_{\mathbb P^ p}(1)\otimes \mathcal I_{\mathfrak{C}'/\mathbb P^ p}|$. The intersection of $\mathfrak{C}'$ with $E'$ consists of a divisor $\mathfrak c$ of degree $\delta+1$, i.e.,  $P+Q$ plus the $\delta-1$ double points of the line chain $\mathfrak{C}'$.
One has $\dim(\Sigma)=g-1$, and the hyperplanes in $\Sigma$ cut out $E'$ in $\mathfrak c$ plus a divisor $D$ of degree $g$. Hence the linear system $\Sigma$ cuts out on $E'$, off $\mathfrak c$, the complete, base point free, linear system $|D|$ of degree $g$. 

Assume  $g\geqslant 17$.   Take a general hyperplane $H'\in \Sigma$, let $C'=\mathfrak C'+\gamma'_1+\gamma'_2$ be the curve in  $W_{\delta}(R')$ cut out by $H'$ on $R'$ (see Definition \ref {def:compopartenza} and Figure  \ref {fig:dis}), and let $D$ be divisor cut out by $H'$ on $E'$ off $\mathfrak c$. Take $D'$ any effective divisor of degree $15$ contained in $D$. Since $g\geqslant 17$, $D'$ is a general effective divisor of degree $15$ on $E'$.

Let $P'$ be the unique point on $E'$ such that $W:=P'+D'\in  |T^1_{R'}|$.  We claim that 
\begin{equation}
  \label{eq:noncista}
  P'\not\in C', \,\, \text {i.e.}\,\, P' \not \in D+\mathfrak c.
\end{equation}

Indeed if $P'  \in \mathfrak c$,  by an analogue of \eqref {eq:relgen-odd} or \eqref {eq:relgen-even} there would be an integer $k \geqslant1$ such that either
\begin{eqnarray}
  \label{eq:roma1} P+P' & \sim & kL_2-(k-1)L_1, \; \; \mbox{or}\\
\label{eq:roma2}   P-P' & \sim & k(L_2-L_1). 
 \end{eqnarray}Since $T^1_{R'} \simeq \O_{E'}(D'+P')$, then \eqref{eq:B} and \eqref{eq:roma1} combined yield 
 \[
\O_{E'}(P)\simeq \O_{E'}(D')\otimes  \O_{E'}(-4)\otimes L_1^{(p-k-2)} \otimes L_2^{(p+k-3)}. 
   \]
This uniquely determines $P$ once $D', L_1, L_2$ and $\O_{E'}(1)$ have been, as they can, generically chosen, which is a contradiction, since $P$ is also general on $E'$. Similarly by combining  \eqref{eq:B} and \eqref{eq:roma2}.  This proves that $P'  \not\in \mathfrak c$.

To prove that $P'\not \in D$ we note that, by the generality of $D'$, the divisor $P'+D'$ is  general in $|T^1_{R'}|$, hence it is reduced, thus $P'\not \in D'$. Moreover, since $|D-D'|$ is base point free, because $\deg(D-D')\geqslant 2$, we may also assume that $P'\not\in D-D'$. This ends the proof of \eqref {eq:noncista}. 

Next we project $R'_i$ from $D'$. 
By the generality of $D'$, we obtain a  smooth rational normal scroll $R''_i \subset \PP^{p-15}$ for $i=1,2$
(smoothness follows from $p \geqslant21$ and the fact that the projection is an internal projection of the original scroll from the linear span of $D'$), and $R''_1$ and $R''_2$ intersect transversally along a smooth elliptic curve 
$E'' \cong E'$ (the projection of $E'$). Under this projection, the image $C''$ of $C'$ is isomorphic to the curve obtained by normalizing $C'$ at $D'$. 
Thus $C''$ still has $\delta$ marked nodes on the smooth locus of $R''=R''_1\cup R''_2$, whose set we denote by $Z$ as the nodes of $C'$. Since $g\geqslant 17$, the curves $\gamma''_1$ and  $\gamma''_2$, images 
 of $\gamma'_1$ and  $\gamma'_2$, intersect in at least two points on $E''$. Hence, the normalization of $C''$ at the $\delta$ marked nodes is connected. By \cite[Rem.\,1.1]{ck} we have $\dim (|\O_{R''}(1) \* \I_{Z}|)=(p-15)-\delta = g-15$.
Since the hyperplane sections of $R''$ are in one-to-one correspondence with the hyperplane sections of $R'$ passing through $D'$,  this  yields 
\begin{equation*}  \label{eq:dimsu'}
\dim (|\O_{R'}(1) \* \I_{D' \cup Z }|)=g-15.
\end{equation*}

To accomplish the proof we have to exclude that   
\begin{equation}\label{eq:contr}
 |\O_{R'}(1) \* \I_{W \cup Z }|=|\O_{R'}(1) \* \I_{D' \cup Z }|,
 \end{equation}
which would mean that any hyperplane passing through $Z$ and $D'$ contains also $P'$.  If this were  the case, this would in particular happen for the curve $C'$, against \eqref {eq:noncista}. This ends the proof in the case $g\geqslant 17$.

For $g=16$ the proof runs exactly as above. There is only one minor change in the proof of \eqref {eq:noncista}. The proof that  $P'\not\in \mathfrak c+D'$ works with no change.  If $P'\in D-D'$, then $P'= D-D'$, hence $D\in |T^1_{R'}|$. 
Since $\mathfrak c+D\sim \O_{E'}(1)$, then   \eqref{eq:B} yields 
\[
\mathfrak c\sim \O_{E'}(-3)\otimes L_1^{\otimes (p-3)}\otimes  L_2^{\otimes (p-3)}.
\]
By \eqref {eq:relgen-odd_1} and \eqref {eq:relgen-even_2} we have 
\begin{eqnarray*}
 P+Q &\sim  \O_{E'}(-3)\otimes L_1^{\otimes (p-3- \frac{\delta-1}{2})}\otimes  L_2^{\otimes (p-3)} \; \; &\mbox{when $\delta$ is odd;} \\
 P  &\sim \O_{E'}(-3)\otimes L_1^{\otimes (p-3 - \frac{\delta}{2})}\otimes  L_2^{\otimes (p-3)}  \; \; &\mbox{when $\delta$ is even.} 
\end{eqnarray*}
The latter relation contradicts the generality of $P$. The former gives, together with \eqref{eq:relgen-odd_1}, the relation
\[ \O_{E'}(3) \sim L_1^{\*(p-\delta-2)} \* L_2^{\*(p-3-\frac{\delta+1}{2})},\]
contradicting the general choices of $L_1$ and $L_2$.

Let us finally consider the case $g=15$. The basic idea of the proof is the same, so we will be brief. 
We let $D'$ be any effective divisor of degree 14 contained in $D$, so that $D'$ is a general divisor of degree 14 on $E'$. Let $P'$ be the unique point of $E'$ such that
$W:=D'+P+P'\in T^ 1_{R'}$. We claim that \eqref {eq:noncista} still holds. The proof is similar to the ones in the previous cases and can be left to the reader. Then we project $R'$ from $D'$. The projection $C''$ of $C'$ is connected and has $\delta$ marked nodes on the smooth locus of the projection $R''$ of $R'$, whose set we denote by $Z$ as the nodes of $C'$. Then $\dim (|\O_{R''}(1) \* \I_{Z}|)=(p-14)-\delta = 1$, hence
$\dim (|\O_{R'}(1) \* \I_{D' \cup Z }|)=1$.
We claim that 
\begin{equation*} 
|\O_{R'}(1) \* \I_{D' \cup \{P\} \cup Z }|=\{C'\}.
\end{equation*}
Indeed, a curve in $|\O_{R'}(1) \* \I_{D' \cup \{P\} \cup Z }|$ clearly contains the line cycle $\mathfrak C'$, hence it cuts on $E'$ a divisor of degree $p+1$ which contains $\mathfrak c+D'$ whose degree is 
$p$. Hence this curve is uniquely determined. Since $C'\in |\O_{R'}(1) \* \I_{D' \cup \{P\} \cup Z }|$ the assertion follows. Finally, by \eqref {eq:noncista}, we see that $|\O_{R'}(1) \* \I_{W \cup Z }|=\emptyset$, proving the assertion in this case.\end{proof}

\begin{corollary} \label{cor:XWZic} If $g \geqslant15$ then, for $i=1,2$, we have: 
\[ \dim (V_i)=h^0(\O_{R_i}(H+2\pi_i^*(F_i)) \* \I_{X_i \cup W_i \cup Z_i})=p-\delta_i-k_i+1 \; \; \mbox{and} \; \; h^1(\O_{R_i}(H+2\pi_i^*(F_i)) \* \I_{X_i \cup W_i \cup Z_i})=0.\]
\end{corollary}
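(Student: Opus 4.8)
The plan is to combine the dimension computation of $V_i$ carried out in the proof of Lemma \ref{lem:V1V2trans} with a Riemann--Roch / cohomology vanishing argument on the scrolls $R_i'$ (equivalently $R_i$). Recall from that proof that, under the generality assumption and for $g \geqslant 15$, one has the identification $V_i \cong H^0(\O_{R_i'}(H') \* \I_{W_i \cup Z_i})$ (after projecting away the four ramification points $X_i$, which turns $\O_{R_i'}(H'+2F_i)\*\I_{X_i}$ into the hyperplane bundle of the projected scroll $R_i''$). So the left-most equality in the Corollary amounts to computing $h^0(\O_{R_i'}(H')\*\I_{W_i \cup Z_i})$, where $W_i$ consists of $k_i$ points and $Z_i$ of $\delta_i$ points, all in general position on $E' \subset R_i'$ (one must invoke Proposition \ref{lemma:belleintersezioni} or, more precisely, its proof to know these points can be taken general — this is exactly the content of the construction in Lemma \ref{lem:V1V2trans}).

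First I would set up the restriction sequence
\[
\xymatrix{ 0 \ar[r] & \O_{R_i'}(H'-E') \ar[r] & \O_{R_i'}(H') \*\I_{W_i \cup Z_i} \ar[r] & \O_{E'}(H')(-W_i - Z_i) \ar[r] & 0,}
\]
using $E' \in |-K_{R_i'}|$ and the fact that $W_i, Z_i$ lie on $E'$. Since $R_i'$ is a rational scroll, $H'-E' = H'+K_{R_i'}$ is (for the relevant range) non-effective with vanishing higher cohomology — indeed $H^j(\O_{R_i'}(H'+K_{R_i'})) = 0$ for all $j$, which follows from $\O_{R_i'}(H')$ being $0$-regular, or by Kodaira vanishing plus $h^0(\O_{R_i'}(K_{R_i'}+H'))=0$, the latter because $H'-E'\sim (n-2)F_i - \sigma_i$ (or similar) has negative degree on the fibers. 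Hence $H^j(\O_{R_i'}(H')\*\I_{W_i\cup Z_i}) \cong H^j(\O_{E'}(H')(-W_i-Z_i))$ for all $j$. Now $\O_{E'}(H')$ has degree $p+1$ on the elliptic curve $E'$, we are removing $k_i + \delta_i$ general points, so the line bundle $\O_{E'}(H')(-W_i - Z_i)$ has degree $p+1 - k_i - \delta_i$. By \eqref{eq:B}, $\deg(W) = 16$ and $k_1+k_2 = 16$, $\delta_1+\delta_2=\delta$, and $g = p-\delta$; one checks $p+1-k_i-\delta_i \geqslant g+1-15 = g-14 \geqslant 1$ for $g\geqslant 15$, so this degree is strictly positive. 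For a line bundle of positive degree on an elliptic curve, $h^1 = 0$ and $h^0 = \deg = p+1-k_i-\delta_i = p - \delta_i - k_i + 1$, which gives both claimed equalities at once.

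The one subtlety — and the main thing to be careful about rather than a genuine obstacle — is that the vanishing $h^1 = 0$ requires the degree to be positive (degree $0$ on an elliptic curve would give $h^0 \in \{0,1\}$ depending on triviality, and degree $\geqslant 1$ is needed to be safe); one must therefore track the inequality $k_i + \delta_i \leqslant p$ carefully using $k_i \leqslant 16$ and $\delta_i \leqslant \delta = p - g \leqslant p - 15$, giving $k_i + \delta_i \leqslant 16 + (p-15) - (16-k_i) = p+1-(16-k_i)\leqslant p$, with the needed strictness when $g\geqslant 15$ following since then $p+1-k_i-\delta_i \geqslant (g-14)\geqslant 1$. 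I would also remark that the translation between the statement on $R_i$ (with the bundle $\O_{R_i}(H+2\pi_i^*(F_i)) \* \I_{X_i\cup W_i \cup Z_i}$, where $W_i = \ee_i\cdot(\text{stuff})$ enters as $\pi_i(Y_i)$) and the statement on $R_i'$ is exactly the Leray/projection formula already used in Lemma \ref{lem:V1V2trans}: $H^j(\O_{R_i}(H+2\pi_i^*(F_i)) \*\I_{X_i\cup W_i\cup Z_i}) \cong H^j(\O_{R_i'}(H'+2F_i)\*\I_{X_i\cup W_i\cup Z_i})$ since $\pi_i$ contracts the curves $\ee_{i,j}$ and $\O_{R_i}(2\pi_i^*(F_i))$ is a pullback; and then projection from the four points of $X_i$ converts this into the hyperplane bundle of $R_i''$, identifying the group with $H^j(\O_{R_i''}(1)\*\I_{W_i\cup Z_i})$, for which the restriction-to-$E''$ argument above applies verbatim. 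So the only real work is the elementary elliptic-curve cohomology count, and the paper's earlier results supply everything else.
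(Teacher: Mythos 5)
There is a genuine gap, and it sits exactly where the real content of the corollary lies. Your restriction sequence
\[
\xymatrix{ 0 \ar[r] & \O_{R_i'}(H'-E') \ar[r] & \O_{R_i'}(H') \*\I_{W_i \cup Z_i} \ar[r] & \O_{E'}(H')(-W_i - Z_i) \ar[r] & 0}
\]
is not correct, because $Z_i$ does \emph{not} lie on $E'$: by the very definition of $V_{\delta}(R)$ (and of $W_{\delta}(R)$), the marked nodes lie \emph{off} the singular locus $E$, so only $W_i$ and $X_i$ sit on $E'$. The correct sequence is
$0 \to \O_{R_i'}(H'-E')\*\I_{Z_i} \to \O_{R_i'}(H')\*\I_{W_i \cup Z_i} \to \O_{E'}(H')(-W_i) \to 0$,
and since $h^0(\O_{R_i'}(H'-E'))=h^1(\O_{R_i'}(H'-E'))=0$ one gets $h^1(\O_{R_i'}(H'-E')\*\I_{Z_i})=\delta_i\neq 0$. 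So the elliptic-curve degree count no longer computes $h^0$ or $h^1$ of the middle term: the desired vanishing $h^1(\O_{R_i'}(H')\*\I_{W_i\cup Z_i})=0$ is equivalent to the surjectivity of the connecting map onto that $\CC^{\delta_i}$, i.e.\ to the statement that the $\delta_i$ nodes impose independent conditions --- which is precisely what has to be proved. Your fallback, that the points of $Z_i$ "can be taken general", is not available: the nodes of a curve in $W_{\delta}(R)$ are the intersections of $\gamma_i$ with the lines of a connected line chain, and once the starting point $P$ is fixed the whole chain (hence the location of every node) is determined by \eqref{eq:relgen-odd}--\eqref{eq:relgen-even}; for $\delta\geqslant 6$ one cannot place them generally. (A smaller issue: your bound $\deg\geqslant g-14\geqslant 1$ is also off; one only gets $p+1-k_i-\delta_i\geqslant g-15\geqslant 0$.)

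The paper's proof avoids imposing-conditions arguments on $Z_i$ altogether. It takes the trivial lower bound $\dim(V_i)=h^0(\O_{R_i'}(H'+2F_i)\*\I_{X_i\cup W_i\cup Z_i})\geqslant p+1-\delta_i-k_i$ (each point drops $h^0$ by at most one), and then gets the matching upper bound from the Grassmann formula: $\dim(V_1)+\dim(V_2)=\dim(V_1+V_2)+\dim(V_1\cap V_2)\leqslant (p+1)+(g-15)$, using $V_1+V_2\subseteq H^0(\O_{E'}(1))$ and the hard input $\dim(V_1\cap V_2)=g-15$ from Lemma \ref{lem:V1V2trans}. Since $\sum_i(p+1-\delta_i-k_i)=(p+1)+(g-15)$, equality is forced, and $h^1=0$ follows because $h^0$ attains the Euler characteristic. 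Your argument never uses Lemma \ref{lem:V1V2trans} as an input (only as a source of "generality"), so it cannot be repaired without essentially reinstating that mechanism.
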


\begin{proof} By \eqref{eq:cV} and Leray's spectral sequence, we have 
\begin{equation}
\label{eq:dimensione}
\dim (V_i)=h^0(\O_{R'_i}(H'+2F_i) \* \I_{X_i \cup W_i \cup Z_i}) \geqslant p+1-\delta_i-k_i \geqslant 0, \,\, \text{for}\,\, i=1,2,
\end{equation} 
because $p+1-\delta_i-k_i\geqslant p-\delta-16+1 = g-15 \geqslant 0$. 
As $\dim (V_1 \cap V_2)=g-15 \geqslant0$ and $V_1 + V_2 \subseteq H^0(\O_{E'}(1))$, the latter of dimension $p+1$, by the Grassmann formula equality must hold in \eqref {eq:dimensione}. The statement about $h^1$ then  follows. 
\end{proof}

Recalling  \eqref{eq:defU}, we have:

\begin{lemma} \label{lem:U1U2trans} 
 There exist $W \in |T^1_{R'}|$ and $C \in V_{\delta}(R)$, with $Z$ its scheme of $\delta$ marked nodes, such that  $\dim(U_1 \cap U_2) =\max\{0,g-7\}$.
\end{lemma}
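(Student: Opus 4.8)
The strategy is to mimic the proof of Lemma \ref{lem:V1V2trans}, transferring the computation from the blown-up surfaces $R_i$ to the scrolls $R'_i$ and then to projective space. Recall from \eqref{eq:defU} that $U_i = H^0(\O_{R_i}(H-2\pi_i^*(F_i)) \* \I_{Z_i})_{|E} \subseteq H^0(\omega_E(H)(-X_i))$, the inclusion being at divisor level given by restricting a divisor in $|\O_{R_i}(H-2\pi_i^*(F_i)) \* \I_{Z_i}|$ to $E$ and then adding the ramification points $X_i$. First I would use Leray's spectral sequence and the projection formula to identify $U_i$ with a subspace of $H^0(\O_{E'}(H'))$: concretely, divisors in $|\O_{R_i}(H-2\pi_i^*(F_i)) \* \I_{Z_i}|$ correspond, on $R'_i$, to divisors in $|\O_{R'_i}(H'-2F_i) \* \I_{Z_i}|$, which is the linear system cut out by quadric (resp.\ hyperplane after the relevant projection) sections; the key point is that $|\O_{R'_i}(H'-2F_i)|$ embeds $R'_i$ (after internal projection from four fibers, which is legitimate as in Case 2 of Lemma \ref{lem:V1V2trans}) as a scroll of the same type in a smaller projective space, so that I can work directly on $R'_i$ and make an identification analogous to \eqref{eq:identificaz}. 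The addition of $X_i$ at the end means that, inside $H^0(\omega_E(H)) \cong H^0(\O_{E'}(H'))$, the subspace $U_i$ is the image of $H^0(\O_{R'_i}(H'-2F_i) \* \I_{Z_i})$ shifted by the fixed divisor $X_i \sim 2L_i$; since $X_1$ and $X_2$ are genuinely different divisors (as $L_1 \neq L_2$), this shift will play a role when intersecting.

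The heart of the matter is then a dimension count for $U_1 \cap U_2$. After the identifications, a nonzero element of $U_1 \cap U_2$ amounts to a section of $\O_{E'}(H')$ that, when $X_1$ is subtracted, lies in the image from $R'_1$, and when $X_2$ is subtracted, lies in the image from $R'_2$; equivalently, at the level of $R' = R'_1 \cup R'_2$, a curve in a suitable linear system on $R'$ through $Z = Z_1 \cup Z_2$ and through some auxiliary divisor encoding the $X_i$'s. I would set up the bookkeeping so that the intersection is computed as $H^0$ of an appropriate twisted ideal sheaf on $R'$ through $Z$, just as in Lemma \ref{lem:V1V2trans} where $V_1 \cap V_2 \cong H^0(\O_{R'}(1) \* \I_{W \cup Z})$. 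Then the proof splits, as there, according to the value of $g$: for $g \leq 7$ one wants $U_1 \cap U_2 = \{0\}$, which should follow from a degree estimate showing the relevant linear system on $E'$ is empty (the twist being too negative, or the constraints from $X_1 \cup X_2 \cup Z$ being too many), while for $g \geq 7$ one wants the exact dimension $g-7$, which will come from a Grassmann-formula argument: bound $\dim U_i$ from above by the expected dimension, exhibit enough sections to force equality, and use $U_1 + U_2 \subseteq H^0(\omega_E(H))$ (of dimension determined by $\deg(\omega_E(H)) = \deg H = p-1$, so $h^0 = p-1$) together with the expected value of $\dim U_1 + \dim U_2$.

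The main obstacle I anticipate is the careful handling of the ramification divisors $X_i$ and the exceptional points $Y_i$: unlike the situation of Lemma \ref{lem:V1V2trans}, where $V_i$ involved $\O_{R_i}(H+2\pi_i^*(F_i))$ and both $X_i$ and $W_i$ appeared as honest base conditions, here $U_i$ involves the \emph{negative} twist $\O_{R_i}(H-2\pi_i^*(F_i))$ and the $X_i$ enter only through the final shift into $H^0(\omega_E(H))$, so the translation dictionary between $R_i$, $R'_i$, and $E'$ must be set up with care to make sure one is subtracting, not imposing, the $X_i$. Relatedly, when $p$ is small (so that $k_1$ or $k_2$ is large, as in Case 1 of Lemma \ref{lem:V1V2trans}), the scroll $R'_i$ may not survive the internal projection from four fibers smoothly, so I expect to need a separate small-$p$ argument paralleling Case 1, directly estimating $\dim U_h$ via the restriction sequence $0 \to \O_{R'_h}(H'-2F_h-E') \to \O_{R'_h}(H'-2F_h) \* \I_{Z_h} \to \cdots$ and the negativity of the degree on $E'$. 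The remaining cases ($g \geq 7$ with moderate or large $p$) should follow the line-chain construction of Lemma \ref{lem:V1V2trans} essentially verbatim, projecting from a general sub-divisor to reduce to a $\dim = g-7$ computation via \cite[Rem.\,1.1]{ck}, with the same kind of genericity argument (analogues of \eqref{eq:noncista}, \eqref{eq:contr}) ruling out the degenerate coincidences; I would flag that these verifications are routine and can be left to the reader where they repeat the earlier pattern.
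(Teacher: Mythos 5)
Your general framework is right (transfer $U_i$ to $R'_i$, rewrite it as $H^0(\O_{R'_i}(H')\*\I_{X_i\cup Z_i})_{|E'}$, and identify $U_1\cap U_2$ with $H^0(\O_{R'}(1)\*\I_{X\cup Z})$ where $X=X_1+X_2$ has degree $8$), and this matches the paper's reduction. But there is a genuine gap in the core of the argument: you claim the hard part runs "essentially verbatim" along the line-chain-plus-projection construction of Lemma \ref{lem:V1V2trans}, and this is precisely where the two situations differ. In Lemma \ref{lem:V1V2trans} the auxiliary divisor is $W\in|T^1_{R'}|$, of which $15$ of the $16$ points may be chosen generally, so one can pick $W$ \emph{adapted to} the curve $C'$ (take $D'\subset D$ general and let $W=P'+D'$). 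Here the auxiliary divisor $X=X_1+X_2$ is the ramification divisor of the two fixed pencils $|L_1|,|L_2|$: it is completely rigid once $L_1,L_2$ are fixed and cannot be moved to sit on a general hyperplane section. So one cannot "project from a general sub-divisor" of $X$ and invoke \cite[Rem.\,1.1]{ck} as before. The paper's proof resolves this by inverting the order of choices: it fixes $L_1,L_2$ (hence $X$) on the abstract elliptic curve first and then \emph{constructs the embedding} $\O_{E'}(1)\simeq\O_{E'}(\hat X+D+P_1+\cdots+P_\delta)$ so that a prescribed subdivisor $\hat X\subset X$ of degree $\min\{7,g-1\}$ lies, together with the line chain, on a hyperplane section. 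This reversal is the key new idea of the lemma relative to Lemma \ref{lem:V1V2trans}, and your proposal does not contain it.

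Relatedly, your claim that for $g\leqslant 7$ the vanishing $U_1\cap U_2=\{0\}$ "should follow from a degree estimate" is only true for $\delta=0$ (equivalently $p\leqslant 7$), where indeed $h^0(\O_{E'}(H'-X))=p-7\leqslant 0$ (plus the non-triviality of $\O_{E'}(2X)\*\O_{E'}(-2)$ when $p=7$). For $g\leqslant 7$ but $\delta\geqslant 2$ one has $h^0(\O_{R'}(1)\*\I_X)=p-7>0$, and one must prove that the marked nodes $Z$ — which are highly non-general points, being intersections of the line chain with the residual curves — impose the remaining $p-7$ independent conditions. The paper achieves this by the special choice of the starting point $P\in X_1$ of the line chain when $g\leqslant 7$, the reducedness claims \eqref{eq:cl1}--\eqref{eq:cl2}, and the case analysis on the number $c$ of conditions that $X-\hat X$ imposes on the pencil $|\O_{R'}(1)\*\I_{\hat X\cup Z}|$ (ruling out $c=0$ and $c=1$). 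None of this is "routine repetition" of the earlier pattern. A small additional slip: $\deg(\omega_E(H))=\deg(\O_E(H))=E\cdot H=p+1$, not $p-1$, so $h^0(\omega_E(H))=p+1$; and in any case the Grassmann-formula step you describe only yields the \emph{lower} bound $\dim(U_1\cap U_2)\geqslant g-7$, whereas the content of the lemma is the matching upper bound, which requires the explicit construction.
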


\begin{proof} 
It is similar to the one of Lemma \ref{lem:V1V2trans}.  We have 
$$U_i = H^0(\O_{R'_i}(H'-2F_i) \* \I_{Z_i})_{|E'} \subset H^0(\omega_{E'}(H')) = H^0(\O_{E'}(H'));$$ as explained right after \eqref{eq:defU}, at level of divisors the inclusion is given by taking a divisor in  \linebreak 
$|\O_{R_i'}(H' - 2 F_i) \* \I_{Z_i}|$, restricting it to $E'$, then adding the ramification divisor $X_i$, for $i=1,2$. 

As in the proof of Lemma \ref{lem:V1V2trans}, where we showed that we could make the identification \eqref{eq:identificaz}, we can reduce to making the identification 
\[ U_i= H^0(\O_{R'_i}(H') \* \I_{X_i \cup Z_i})_{|E'} \subset H^0(\O_{E'}(H')) \] (recall that $X_i\in |2F_i|$). 
Since  $E'$ and $R'_i$ are linearly normal, we thus have
\[ U_1 \cap U_2=H^0(\O_{\PP^{p}}(1) \* \I_{(X \cup Z)/\PP^p})_{|E'}= H^0(\O_{R'}(1) \* \I_{X \cup Z})_{|E'} \cong H^0(\O_{R'}(1) \* \I_{X \cup Z}).
\]where $X:=X_1+X_2$ as a divisor on $E'$.  We have the exact sequence
\[ \xymatrix{ 
0 \ar[r] & \O_{R'} (1) \* \I_{E'} \ar[r] & \O_{R'} (1) \* \I_{X} \ar[r] & \O_{E'}(H'-X) \ar[r] &  0}.
\]
Since $E'$ is non--degenerate, one has $h^ 0(\O_{R'} (1) \* \I_{E'})=0$. Moreover $\deg( \O_{E'}(H'-X))=p-7$ and, when $p=7$,  one has $\O_{E'}(2X) \simeq L_1^{\otimes 4} \otimes L_2^{\otimes 4} \not \simeq \O_{E'}(2)$. So $h^ 0(\O_{E'}(H'-X))=0$ for $p \leqslant 7$, hence in this case 
$h^ 0(\O_{R'} (1) \* \I_{X})=0$ and we are done. 
We may therefore assume that $p \geqslant8$, in which case $h^ 0(\O_{R'} (1) \* \I_{X})=h^ 0(\O_{E'}(H'-X))=p-7$. 

If  $\delta \leqslant 1$, we may find $C\in V_{\delta}(R)$ whose marked node (if any) maps to a general point on one of the two components of $R'$. Then  it follows that $\dim (|\O_{R'}(1) \* \I_{X \cup Z}|) =\max \{0,p-8-\delta\}$, and we are done. 

We may henceforth assume that $\delta \geqslant 2$ and we will construct a curve $C\in W_\delta(R)$ satisfying the assertion.  Assume $L_1$ and $L_2$ are fixed and {general} on $E'$ as an abstract elliptic curve. Then also $X$ is fixed. We will accomplish the proof by finding a suitable embedding of $E'$ as an elliptic normal curve of degree $p+1$ in $\PP^p$, thus defining $R'_1, R'_2$ and $R':=R'_1 \cup R'_2$ via $|L_1|$ and $|L_2|$, and then finding $C' \in W_{\delta}(R')$ with its scheme $Z$ of  marked nodes such that $U_1 \cap U_2$ 
satisfies the desired condition.

Pick a point $P\in E'$. We assume that $P$ is general if $g\geqslant 8$ whereas we take $P\in X_1$ if $g\leqslant 7$. Consider the line chain $\mathfrak C'$ of length $\delta$ with starting point at $P$ and the divisor $\mathfrak c=P+P_1+\ldots+P_\delta$ cut out by $\mathfrak C'$ on $E'$, so that the lines of $\mathfrak C'$ are spanned by $P+P_1$, $P_1+P_2$, etc. 
We claim that
\begin{eqnarray}
  \label{eq:cl1}
  \mbox{if $g\geqslant 8$ the divisor $\mathfrak c$ is reduced and $\mathfrak c\cap X=\emptyset$;}\\
   \label{eq:cl2}  \mbox{if $g\leqslant 7$ the divisor $\mathfrak c$ is reduced and $\mathfrak c\cap X= P$.}
\end{eqnarray}
We prove only \eqref {eq:cl1}, since the proof of \eqref {eq:cl2} is similar and can be left to the reader. For \eqref {eq:cl1}, by generality, $P$ is not in $X$. Moreover, for each $k\in \{1,\ldots, \delta\}$, one has
\begin{eqnarray}\label{eq:sim-odd}
  P+P_k & \sim & \frac{k+1}{2}L_2-\frac{k-1}{2}L_1  \; \; \mbox{if $k$ is odd},
\\
  \label {eq:sim-even} P-P_k & \sim & \frac{k}{2}(L_2-L_1)  \; \;
 \mbox{if $k$ is even},
 \end{eqnarray}
(see \eqref {eq:relgen-odd} and \eqref {eq:relgen-even}). Hence, if $P_k=P_h$ for $1 \leqslant k<h \leqslant \delta$, then there is a non--trivial relation between $L_1$, $L_2$ and $P$, a contradiction. Similarly, if $2P_k\sim L_i$ for $i=1$ or  $i=2$, then \eqref {eq:sim-odd} and \eqref {eq:sim-even} yield a non--trivial relation between $L_1$, $L_2$ and $P$ a contradiction again.  The same if $P=P_k$ for some $k\in \{1,\ldots, \delta\}$. Hence \eqref{eq:cl1} is proved. 

Fix now an effective subdivisor $\hat{X}$ of $X$ of degree $\min\{7,g-1\}$, not containing $P$ if $g \leqslant 7$, and pick a {general} effective divisor $D$ on $E'$ containing $P$ with
\[\deg (D) =g+1-\deg(\hat X) \geqslant 2.\]
The  divisor
$F:=\hat{X} + D +  P_1 + \cdots + P_{\delta}$
has degree $p+1$ and $|\O_{E'}(F)|$ gives an embedding of $E'$ as an elliptic normal curve of degree $p+1$ in $\PP^p$. The hyperplane cutting out the divisor $F$ on $E'$ cuts out on $R'$ a curve $C' \in W_{\delta}(R')$ with line chain $\mathfrak C'$.
With notation as in Definition \ref{def:compopartenza},  $\gamma'_1 \cap \gamma'_2=\hat X+D-P$.

When we project $R'_i$ from $\hat{X}$ we obtain a smooth rational normal scroll $R''_i$ in $\PP^r$ with $r={\max\{p-7,\delta+1\}}\geqslant 3$ (here we use $\delta \geqslant 2$ and the fact that the projection is an internal projection of a scroll). As above, $R''_1$ and $R''_2$ intersect transversally along a smooth elliptic curve $E'' \cong E'$. Under this projection, the image $C''$ of $C'$ is isomorphic to the curve obtained by normalizing $C'$ at $\hat{X}$. It still has $\delta$ nodes on the smooth locus of $R''$, which we denote by $Z$ as the nodes of $C'$. The images of $\gamma'_1$ and  $\gamma'_2$ under the projection intersect in at least $\deg(D-P)\geqslant 1$ points on $E''$. Hence, the normalization of $C''$ at the nodes lying on the smooth locus of $R''=R''_1\cup R''_2$ is  connected, i.e., the marked nodes are not disconnecting. By \cite[Rem. 1.1]{ck}, we have 
\begin{equation}  \label{eq:dimcazzo}
\dim (|\O_{R''}(1) \* \I_{Z}|)=r-\delta=\max\{g-7,1\}.
\end{equation}
Since the hyperplane sections of $R''$ are in one-to-one correspondence with the hyperplane sections of $R'$ passing through $\hat X$,  \eqref{eq:dimcazzo} yields 
\begin{equation}  \label{eq:dimcazzone}
\dim (|\O_{R'}(1) \* \I_{\hat{X} \cup Z }|)=\max\{g-7,1\}.
\end{equation}

If $g\geqslant 8$, we are done unless 
\begin{equation*}\label{eq:sob}
\dim (|\O_{R'}(1) \* \I_{X \cup Z }|)=\dim (|\O_{R'}(1) \* \I_{\hat{X} \cup Z }|)=g-7
\end{equation*}
which means that if a hyperplane in $\PP^ p$ contains $\hat{X} \cup Z$ it also contains
$X$.  In this case we would have that $X \subset C'$, because $C'\in |\O_{R'}(1) \* \I_{\hat{X} \cup Z }|$, so that
\[ X <  F=\hat{X} + D +  P_1 + \cdots + P_{\delta}=\hat{X} + D + (\mathfrak{c}-P).\]
This  contradicts either \eqref{eq:cl1} or the generality of $D$.

If $g \leqslant 7$, we are done unless $X-\hat X$, which has degree $9-g\geqslant 2$ and contains $P$, imposes 
$c\leqslant 1$ condition to the 1--dimensional system $|\O_{R'}(1) \* \I_{\hat{X} \cup Z }|$. If $c=0$, then, as above, we would have $X<F$ and we find a contradiction. 

Suppose $c=1$.  Then $P$ is not in the base locus $B$ of $|\O_{R'}(1) \* \I_{\hat{X} \cup Z }|$. Otherwise  $\mathfrak C'$ sits in $B$, and  (as at the end of the proof of Lemma \ref {lem:V1V2trans}) $|\O_{R'}(1) \* \I_{\hat{X} \cup Z }|=\{C'\}$, contradicting \eqref {eq:dimcazzone}. 
Thus  $c=1$  yields  
\[|\O_{R'}(1) \* \I_{\hat{X}\cup \{P\} \cup Z }|= |\O_{R'}(1) \* \I_{X \cup Z }|=\{C'\},\]
implying again $X <F$, which leads to a contradiction as above. 

This proves that $c=2$, i.e.,  the assertion.\end{proof}

\begin{corollary} \label{cor:calcoloesp} If $g \geqslant7$ then, for $i=1,2$, we have: 
\[ \dim (U_i)=h^0(\O_{R_i}(H-2\pi_i^*(F_i)) \* \I_{Z_i})=p-3-\delta_i \; \; \mbox{and} \; \; h^1(\O_{R_i}(H-2\pi_i^*(F_i)) \* \I_{Z_i})=0.\]
\end{corollary}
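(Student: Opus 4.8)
The plan is to extract the dimension count for $U_i$ from the proof of Lemma \ref{lem:U1U2trans} combined with a Grassmann formula argument, exactly as Corollary \ref{cor:XWZic} was deduced from Lemma \ref{lem:V1V2trans}. First I would recall from the proof of Lemma \ref{lem:U1U2trans} that $U_i$ can be identified (via Leray) with $H^0(\O_{R'_i}(H') \* \I_{X_i \cup Z_i})_{|E'} \subset H^0(\O_{E'}(H'))$, and that by \cite[Rem.\,1.1]{ck} together with the projection-from-$X_i$ computations one gets $h^0(\O_{R'_i}(H') \* \I_{X_i \cup Z_i}) \geqslant p+1 - \deg(X_i) - \delta_i = p - 3 - \delta_i$, since $\deg(X_i)=\deg(2L_i)=4$ and $Z_i$ consists of $\delta_i$ points. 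This gives the inequality $\dim(U_i) \geqslant p-3-\delta_i$.

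Next I would invoke Lemma \ref{lem:U1U2trans}, which produces a configuration with $\dim(U_1 \cap U_2) = \max\{0, g-7\} = g-7$ (using $g \geqslant 7$), and observe that $U_1 + U_2 \subseteq H^0(\O_{E'}(H'))$, which has dimension $p+1$ since $E'$ is an elliptic normal curve of degree $p+1$. The Grassmann formula then yields
\[
\dim(U_1) + \dim(U_2) = \dim(U_1 \cap U_2) + \dim(U_1 + U_2) \leqslant (g-7) + (p+1) = (p-3-\delta_1) + (p-3-\delta_2),
\]
where the last equality uses $g = p - \delta = p - \delta_1 - \delta_2$. Combined with the two lower bounds $\dim(U_i) \geqslant p-3-\delta_i$, this forces equality $\dim(U_i) = p-3-\delta_i$ for $i=1,2$, and simultaneously forces $U_1 + U_2$ to equal all of $H^0(\O_{E'}(H'))$.

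Finally, the $h^1$ vanishing follows from the dimension count: from the left-most vertical exact sequence in \eqref{eq:vaffa} (or directly), $h^0(\O_{R_i}(H-2\pi_i^*(F_i)) \* \I_{Z_i}) = \dim(U_i) = p-3-\delta_i$ by \eqref{eq:cU}, and one computes the Euler characteristic $\chi(\O_{R_i}(H-2\pi_i^*(F_i)) \* \I_{Z_i})$ directly on the scroll $R'_i \cong \mathbb{F}_{1-\varepsilon}$, getting the same value $p-3-\delta_i$ once one checks $h^2$ vanishes (which it does by Serre duality and positivity, as $K_{R_i} - H + 2\pi_i^*(F_i)$ is clearly non-effective). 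Since $\chi = h^0 - h^1 + h^2$ and $h^0$ matches $\chi$ with $h^2 = 0$, we get $h^1 = 0$. I expect the only mild obstacle to be bookkeeping the Euler characteristic computation on the scroll carefully — keeping track of the contribution of the $Z_i$ (which drops $\chi$ by exactly $\delta_i$ since $Z_i$ imposes independent conditions, this being part of what $W_\delta(R)$ being a smooth component guarantees) and confirming the base $\chi(\O_{R'_i}(H'-2F_i)) = p-3$ via Riemann--Roch on the Hirzebruch surface; everything else is a direct transcription of the Corollary \ref{cor:XWZic} argument.
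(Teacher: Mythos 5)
Your proposal is correct and follows essentially the same route as the paper: the lower bound $\dim(U_i)\geqslant p-3-\delta_i$ from the Euler characteristic on the scroll, the upper bound forced by $\dim(U_1\cap U_2)=g-7$ and the Grassmann formula inside the $(p+1)$-dimensional space $H^0(\O_{E'}(H'))$, and then $h^1=0$ because $h^0=\chi$ with $h^2=0$. The paper's proof is merely terser, leaving the Riemann--Roch bookkeeping and the $h^2$ vanishing implicit.
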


\begin{proof} 
 By \eqref{eq:cU} and Leray, we have $\dim (U_i)=h^0((\O_{R'_i}(H'-2F_i) \* \I_{Z_i}) \geqslant p-3-\delta_i$. 
 As $\dim (U_1 \cap U_2)=g-7 \geqslant 0$, by the Grassmann formula each $U_i$ must have the (expected) dimension. The statement about $h^1$ then  follows. 
\end{proof}

\begin{proof}[Proof of Proposition \ref{lemma:belleintersezioni}] (i) If $g \geqslant 15$, then $h^1(\O_{R_i}(H-2\pi_i^*(F_i)) \* \I_{Z_i})=0$ follows from Corollary \ref{cor:calcoloesp}. Hence, by \eqref{eq:cV}, we have $H^0(\FF_i)=V_i$ and, by Corollary \ref{cor:XWZic},  we have $h^1(\O_{R_i}(H+2\pi_i^*(F_i)) \* \I_{X_i \cup W_i \cup Z_i})=0$. Therefore, the part of the statement Proposition \ref{lemma:belleintersezioni}(i) concerning the vanishing of the $h^1$'s is proved.  The rest  follows by Lemma \ref{lem:V1V2trans}, Corollary \ref{cor:XWZic} and Grassmann formula. 

(ii) If $g \leqslant 7$, the statement  follows by  Lemmas \ref{lem:U1U2trans}, \ref{lemma:VU} and \ref{lem:V1V2trans}.
\end{proof}

%
%


\begin{thebibliography}{[FKP3]}


\bibitem{beau} A.~Beauville,  {\it Fano Threefolds and K3 Surfaces}, The Fano Conference. Univ. Torino, Turin, 175--184.

\bibitem{CC} L.~Chiantini, C.~Ciliberto, {\it On the Severi varieties of surfaces in $\PP^3$}, J. Algebraic Geom. {\bf 8} (1999), 67--83.

 
\bibitem{CS} L.~Chiantini, E.~Sernesi, {\it Nodal curves on surfaces of
general type}, Math. Ann. {\bf 307} (1997), 41--56.


\bibitem{ck} C.~Ciliberto, A.~L.~Knutsen, {\it On $k$-gonal loci in Severi varieties on general $K3$ surfaces and rational curves on hyperk\"ahler manifolds}, J. Math. Pures Appl. {\bf 101} (2014), 473--494. 



\bibitem{cfgk} C.~Ciliberto, F.~Flamini, C.~Galati, A.~L.~Knutsen,  {\it Moduli of nodal curves on $K3$ surfaces}, arXiv:1502.07378, to appear in Adv. Math. 



\bibitem{clm} C.~Ciliberto, A.~F.~Lopez, R.~Miranda, {\it Projective degenerations of $K3$ surfaces,
Gaussian maps and Fano threefolds}, Invent. Math. {\bf 114} (1993), 641--667.


\bibitem{del} P.~Deligne, {\it {\'E}quations diff{\'e}rentielles {\`a} points singuliers r{\'e}guliers}, Springer Lecture Notes in Math. {\bf 163} (1970).


\bibitem{E}  F.~Enriques, {\it Sopra le superficie algebriche di bigenere uno}, Mem. Soc. Ital.
delle Scienze, ser. 3a, {\bf 14} (1906), 327--352.



\bibitem{ev} H.~Esnault, E.~Viehweg, {\it Lectures on vanishing theorems}, DMV Seminar, vol. 20, Birkh\"auser Verlag, Basel, 1992.





\bibitem{F1} F.~Flamini,
{\it Some results of regularity for Severi varieties of projective surfaces}, Comm. Algebra {\bf 29} (2001), 2297--2311. 





\bibitem{F2} F.~Flamini, 
{\it Moduli of nodal curves on smooth surfaces of general type}, J. Algebraic Geom. {\bf 11} (2002),  725--760.







\bibitem{fkps} F.~Flamini, A.~L.~Knutsen, G.~Pacienza,  E.~Sernesi,
{\it Nodal curves with general moduli on {$K3$} surfaces}, Comm. Algebra {\bf 36} (2008), 3955--3971.


\bibitem{fri} R.~Friedman, {\it Global smoothings of varieties with normal crossings}, Annals of Math. {\bf 118} (1983), 75--114.

\bibitem{GrHu} V. Gritsenko, K. Hulek, {\it Moduli of polarized Enriques surfaces}, in {\it K3 Surfaces and Their Moduli}, {\bf 315} (2016), Progress in Mathematics, 55--72


\bibitem{halic2} M.~Halic, {\it Modular properties of nodal curves on 
$K3$ surfaces}, Math. Z. {\bf 270} (2012), 871--887.  


\bibitem{halicb} M.~Halic, {\it Erratum to: Modular properties of nodal curves on 
$K3$ surfaces}, Math. Z. {\bf 280} (2015), 1203--1211.

\bibitem{Ke} M.~Kemeny, {\it The moduli of singular curves on $K3$ surfaces},	J. Math. Pures Appl. {\bf 104} (2015), no. 5, 882--920. 


\bibitem{K} V.S.~Kulikov, {\it Degenerations of $K3$ surfaces and Enriques surfaces},	AMS Mathematics of the USSR-Izvestiya {\bf 11} (1977), no. 5, 957--989. 


\bibitem{MM} S.~Mori, S.~Mukai, {\it The uniruledness of the moduli space of curves of genus $11$},
Algebraic Geometry, Proc. Tokyo/Kyoto, 334--353, Lecture Notes in Math. {\bf 1016}, Springer, Berlin, 1983.

\bibitem{M} D.R.~Morrison, {\it Semistable degenerations of Enriques' and hyperelliptic surfaces}, Duke Math. J. {\bf 48} (1981), no. 1, 197--249.

\bibitem{Ser} E.~Sernesi, {\it Deformations of Algebraic Schemes},
Grundlehren der mathematischen Wissenschaften {\bf 334}. Springer-Verlag, Berlin, Heidelberg, New York, 2006.



\bibitem{Shah} J.~Shah, {\it  Projective degenerations of Enriques' surfaces}, Math. Ann. {\bf 256} (1981), no. 4, 475--495. 


\bibitem{ste} J.~Steenbrink, {\it Limits of Hodge structures}, Invent. Math. {\bf 31} (1976), 229--257.


\end{thebibliography}
\end{document}